\theoremstyle{plain}
\newtheorem{theorem}{Theorem}[section]
\newtheorem{lemma}[theorem]{Lemma}
\newtheorem{thmx}{Theorem}
\theoremstyle{definition}
\newtheorem{definition}[theorem]{Definition}
\newtheorem*{conjecture}{Conjecture}
\theoremstyle{remark}
\numberwithin{equation}{section}
\newcommand{\R}{{\mathbb R}}
\newcommand{\bS}{\mathbb{S}}
\newcommand{\ga}{\gamma}
\newcommand{\e}{\varepsilon}
\newcommand{\Om}{\Omega}
\newcommand{\La}{\Lambda}
\newcommand{\Ga}{\Gamma}
\newcommand{\ti}{\times}
\newcommand{\pa}{\partial}
\newcommand{\su}{\subset}
\newcommand{\qu}{\quad}
\newcommand{\sm}{\setminus}
\newcommand{\ra}{\rightarrow}
\newcommand{\D}{\nabla}
\newcommand{\fr}{\frac}
\newcommand{\inn}[2]{\left\langle {#1},{#2} \right\rangle}
\newcommand{\norm}[1]{\left\lVert#1\right\rVert}
\newcommand{\bu}{\mathbf{u}}
\newcommand{\bv}{\mathbf{v}}
\newcommand{\cO}{\mathcal{O}}
\newcommand{\cA}{\mathcal{A}}
\title[Energy-minimizing curves in constrained spaces]{On the uniqueness of energy-minimizing curves in constrained spaces}
\author{Ki-Ahm Lee}
\address{Department of Mathematical Sciences and Research Institute of Mathematics, Seoul National University, Seoul 08826, Republic of Korea}
\email{kiahm@snu.ac.kr}
\author{Taehun Lee}
\address{School of Mathematics, Korea Institute for Advanced Study, Seoul 02455, Republic of Korea}
\email{taehun@kias.re.kr}
\subjclass[2020]{53C43 (Primary) 58E20, 53A04 (Secondary)}
\keywords{uniqueness, harmonic map, variational inequality, geodesic}
\begin{document}

\begin{abstract}	
In this paper, we investigate energy-minimizing curves with fixed endpoints $p$ and $q$ in a constrained space. 
We prove that when one of the endpoints, say $p$, is fixed, the set of points $q$ for which the energy-minimizing curve is not unique has no interior points.
\end{abstract}

\maketitle

\section{Introduction}
Let $\cO$ be a bounded convex domain in $\R^n$ with smooth boundary $\pa \cO$, and let $p,q$ be two points in $\R^n\sm \overline\cO$. In this paper, we are concerned with energy minimizing curves joining two points $p$ and $q$ in $\cO^c:=\R^n\sm \cO$. More precisely, we investigate minimizers of the energy 
\begin{align}\label{eq:E}
E(\bu)=\int_I |\bu'(t)|^2 \text{d}t
\end{align}
in the admissible set 
\begin{align}\label{eq:cA}
\cA = \left\{ \bu \in H^1(I,\R^n) \vert \,\,\bu(0)=p, \, \bu(1)=q, \,\bu(t) \in \cO^c \text{ for all }t\in I\right\},
\end{align}
where $I=[0,1]$ is the unit interval and $H^1$ is the Sobolev space of vector-valued functions. 
The set $\cO$ is the so-called \textit{obstacle} since functions in $\cA$ are forbidden from entering $\cO$.

To avoid trivial cases, we assume throughout that the line segment $l_{pq}$ joining $p$ to $q$ intersects the obstacle $\cO$, i.e., $l_{pq}\cap \cO\not=\emptyset$.

The first result of this paper shows existence of a minimizer and the optimal regularity of minimizers. A simple example where the minimizer is not $C^2$ can be observed when $\pa\cO$ is $\bS^{n-1}$. This scenario will be discussed in the final section, \Cref{sec:example}.

\begin{theorem}\label{thm:exist-reg}
Let $\mathcal{O}$ be a bounded convex domain in $\R^{n}$ with smooth boundary $\pa\cO$, and let $p,q$ be two points in $\R^n\sm\overline\cO$. Then there exists a minimizer of \eqref{eq:E} in the admissible set $\cA$, and any minimizer is a $C^1$ curve with bounded curvature depending only on the maximum principal curvature of $\pa\cO$.
\end{theorem}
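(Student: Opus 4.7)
The admissible set $\cA$ is nonempty: one can construct a piecewise linear detour around the bounded set $\cO$. Since endpoints are fixed, Poincar\'e's inequality yields $\|\bu\|_{H^1}\le C(1+\|\bu'\|_{L^2})$, so $E$ is coercive on $\cA$; as a convex continuous functional on $H^1$, it is weakly lower semicontinuous. If $\bu_k\rah\bu$ in $H^1(I,\R^n)$, then by Rellich's compactness in one dimension $\bu_k\to\bu$ uniformly, so the pointwise constraint $\bu(t)\in\cO^c$ and the endpoints pass to the limit and $\cA$ is sequentially weakly closed. A minimizing sequence then yields a minimizer.

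\textbf{Variational inequality and structure.} Cauchy--Schwarz gives $|\bu'|\equiv c$ for any minimizer (constant-speed parametrization). Admissible variations $\bu+s\phi\in\cA$ with $\phi\in H^1_0(I,\R^n)$ yield $\int_I\bu'\cdot\phi'\,dt\ge 0$. On the open set $U=\{t:\bu(t)\in\R^n\sm\overline\cO\}$, any sufficiently small compactly supported $\phi$ is admissible, so $\bu''=0$ and $\bu$ is affine on each component of $U$. On the contact set $C=I\sm U$, admissibility forces $\phi(t)\cdot\nu(\bu(t))\ge 0$ with $\nu$ the outward normal to $\pa\cO$, and one deduces that $\bu''$, viewed as an $\R^n$-valued measure, is supported on $C$ and of the form $-\mu\,\nu(\bu)$ with $\mu\ge 0$ a scalar measure; bidirectional tangential variations further show that on the interior of $C$ the curve is a constant-speed geodesic of $\pa\cO$ with its induced metric. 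By convexity of $\cO$, any free subinterval with endpoints on $\pa\cO$ would force its chord (which equals $\bu$ there) to lie in $\overline\cO$, contradicting $\bu(t)\in\cO^c$; hence $C$ is a single closed interval $[a,b]\su(0,1)$.

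\textbf{$C^1$ regularity at junctions.} The crux is $\bu'(a^-)=\bu'(a^+)$ (and similarly at $b$). A jump would contribute a Dirac $\bigl(\bu'(a^+)-\bu'(a^-)\bigr)\delta_a$ to $\bu''$, which by the measure structure on $C$ must be a nonnegative multiple of $-\nu(\bu(a))$, hence has no tangential component. Combined with (i) tangency $\bu'(a^+)\cdot\nu(\bu(a))=0$, and (ii) the inequality $\bu'(a^-)\cdot\nu(\bu(a))\le 0$ coming from the fact that the straight segment $\bu|_{[0,a]}$ reaches $\pa\cO$ from the exterior, the only possibility is $\bu'(a^-)=\bu'(a^+)$. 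Geometrically, by convexity any transversal corner could be rounded off inside $\cO^c$ to strictly decrease the energy. Thus $\bu\in C^1(I,\R^n)$.

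\textbf{Curvature bound and main obstacle.} On affine pieces $\bu''\equiv 0$. On $[a,b]$, as a constant-speed geodesic of $\pa\cO$, the acceleration $\bu''$ is purely normal with $|\bu''|=c^2\,|\mathrm{II}(T,T)|\le c^2\kappa_{\max}$, where $T=\bu'/c$ is the unit tangent and $\mathrm{II}$ is the second fundamental form of $\pa\cO$; consequently the reparametrization-invariant geometric curvature is bounded by the maximum principal curvature $\kappa_{\max}$ of $\pa\cO$. The principal difficulty is the rigorous $C^1$ matching at the free-boundary points $a$ and $b$: it demands a careful signed-measure analysis of $\bu''$ reminiscent of classical obstacle problems, together with the geometric consequence of convexity that $C$ is an interval.
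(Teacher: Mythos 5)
Your existence argument is the same as the paper's (Lemma \ref{lem:exist}): minimizing sequence, uniform $H^1$ bound via Poincar\'e, weak compactness, pointwise passage of the constraint, and weak lower semicontinuity of $E$. For the regularity half, however, you take a genuinely different route: the paper simply invokes the external result quoted as \Cref{thm:reg} (the $C^{1,1}_{loc}$ regularity of constraint maps from \cite{FKS23_arxiv}) together with the Euler--Lagrange equation \eqref{eq:EL}, whereas you give a self-contained one-dimensional obstacle-problem analysis: constant speed from the equality case of Cauchy--Schwarz, the variational inequality, the structure $\bu''=-\mu\,\nu(\bu)$ with $\mu\ge 0$ supported on the contact set, the observation that convexity forces the contact set to be a single interval (your chord argument is in fact slicker than the paper's energy-comparison proof of \Cref{lem:La}), and the atom-killing argument at the junctions to get $C^1$ matching. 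What your approach buys is independence from the cited reference and an explicit mechanism for the $C^1$ regularity (no inward-pointing atom is compatible with tangency of the outgoing geodesic and the sign of the incoming normal component); what the paper's approach buys is brevity and a regularity statement that does not require first establishing the global structure of the minimizer. Two points in your sketch need to be firmed up to be a complete proof: (i) the derivation of the measure structure of $\bu''$ on the contact set requires justifying the admissibility of tangential and one-sided normal variations on the curved boundary (second-order corrections to keep $\bu+s\phi$ in $\cO^c$), which is exactly the content the paper outsources to \cite{Duzaar87_JRAM} and \cite{FKS23_arxiv}; and (ii) the existence of the one-sided derivative $\bu'(a^+)$ used in the junction argument presupposes that the contact portion is already known to be a $C^1$ geodesic up to its endpoint, so the logical order should be: geodesic regularity on $(a,b)$ first, then the corner analysis. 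Neither is a fatal gap; both are standard to fill in.
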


An interesting question is whether minimizers are unique or not. 
The simple example shows that both cases occur. Indeed, when the obstacle is given as the standard unit ball in $\R^{n}$ so that $\pa\cO=\mathbb{S}^{n-1}$, if the line segment $l_{pq}$ passing through the given two points also passes through the origin, then all rotations of a minimizer around this segment become minimizers, resulting in infinitely many minimizers. If this condition is not met, then the minimizer is unique. Detailed discussion on this example can be found in \Cref{sec:example}.

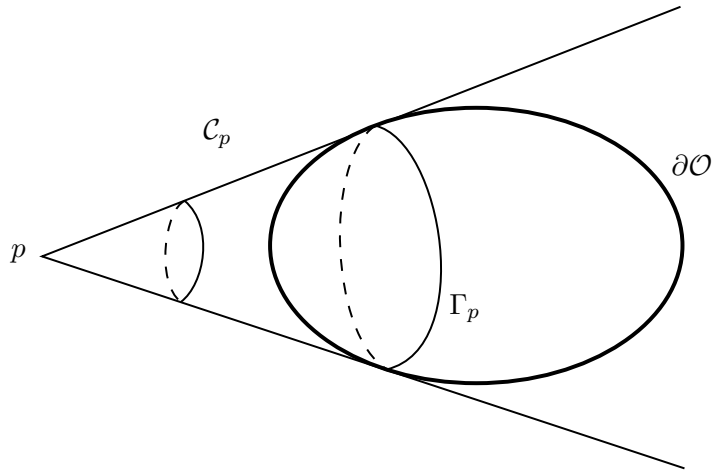
\begin{figure}\label{fig:visions}

\tikzset{every picture/.style={line width=0.75pt}} 

\begin{tikzpicture}[x=0.75pt,y=0.75pt,yscale=-1,xscale=1]

\draw  [line width=1.5]  (249,135.5) .. controls (249,97.12) and (295.56,66) .. (353,66) .. controls (410.44,66) and (457,97.12) .. (457,135.5) .. controls (457,173.88) and (410.44,205) .. (353,205) .. controls (295.56,205) and (249,173.88) .. (249,135.5) -- cycle ;
\draw    (456,15) -- (134,141) -- (458,248) ;
\draw    (302,75) .. controls (336,83) and (353,187) .. (308,198) ;
\draw  [dash pattern={on 4.5pt off 4.5pt}]  (302,75) .. controls (276,93) and (279,181) .. (308,198) ;
\draw    (206,113) .. controls (221,126) and (216,155) .. (204,164) ;
\draw  [dash pattern={on 4.5pt off 4.5pt}]  (206,113) .. controls (196,117) and (191,154) .. (204,164) ;

\draw (117,133.4) node [anchor=north west][inner sep=0.75pt]    {$p$};
\draw (214,70.4) node [anchor=north west][inner sep=0.75pt]    {$\mathcal{C}_{p}$};
\draw (338,158.4) node [anchor=north west][inner sep=0.75pt]    {$\Gamma _{p}$};
\draw (450,88.4) node [anchor=north west][inner sep=0.75pt]    {$\partial \mathcal{O}$};

\end{tikzpicture}

\caption{Vision cone $\mathcal{C}_p$ and vision boundary $\Ga_p$.}

\end{figure}

The example of the standard sphere suggests that non-uniqueness can arise only under specific circumstances. The following result partially confirms this conjecture.

\begin{theorem}\label{thm:unique}
Fix a point $p$ in $\R^n\sm \overline\cO$. 
Let $\mathcal{S}_1$ and $\mathcal{S}_{2}$ denote the sets of points $q$ in $\R^n\sm \overline \cO$ for which the minimizer of the energy \eqref{eq:E} over $\cA$ is unique and non-unique, respectively. Then $\mathcal{S}_2 \su \pa \mathcal{S}_1$, and in particular $\operatorname{int}(\mathcal{S}_2)=\emptyset$.
\end{theorem}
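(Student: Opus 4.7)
The plan is to prove both conclusions simultaneously by arguing that $\mathcal{S}_2$ has Lebesgue measure zero in $\R^n\sm\overline\cO$, which in particular forces $\operatorname{int}\mathcal{S}_2=\emptyset$; the inclusion $\mathcal{S}_2\su\pa\mathcal{S}_1$ is then automatic from the partition $\mathcal{S}_1\cup\mathcal{S}_2=\R^n\sm\overline\cO$, since any $q\in\mathcal{S}_2$ whose neighborhood contained no points of $\mathcal{S}_1$ would lie in $\operatorname{int}\mathcal{S}_2$. The main tool is Rademacher's theorem applied to the value function $d_p(q):=\sqrt{\min_{\cA}E}$, which equals the constrained geodesic distance from $p$ to $q$ because energy minimizers are constant-speed (so $E=L^2$ with $L$ the length). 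Local Lipschitz regularity of $d_p$ on $\R^n\sm\overline\cO$ follows by concatenation: for $q_1,q_2$ in a small Euclidean ball inside $\R^n\sm\overline\cO$, the segment $q_1q_2$ lies in $\cO^c$, so $|d_p(q_1)-d_p(q_2)|\le|q_1-q_2|$.

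The core step is to show that at every $q$ where $d_p$ is differentiable, the minimizer is unique. Given a minimizer $\gamma$ of arclength $L=d_p(q)$ with unit tangent $\tau$ at $q$, for any $w\in\R^n$ and an auxiliary scale $s>0$ I build a competitor for $d_p(q+hw)$ by truncating $\gamma$ at arclength $L-hs$ and appending the straight segment to $q+hw$; this segment lies in $\cO^c$ when $h$ is small since $q\nin\overline\cO$. Using $\gamma(L-hs)=q-hs\tau+O(h^2s^2)$ (from the $C^1$ bounded-curvature regularity of \Cref{thm:exist-reg}) together with the expansion $|w+s\tau|=s+\lan w,\tau\ran+O(s^{-1})$, one obtains
\[
d_p(q+hw)\le L+h\lan w,\tau\ran+O(hs^{-1})+O(h^2s^2);
\]
choosing $s=h^{-1/3}$ makes both errors $O(h^{4/3})$. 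Dividing by $h$ and sending $h\to 0^+$ with $w$ arbitrary yields $\nabla d_p(q)=\tau$, so every minimizer has the \emph{same} unit tangent at $q$.

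Next I argue that this common tangent at $q$ determines the entire minimizer. Any minimizer is $C^1$ with bounded curvature (\Cref{thm:exist-reg}), and its image decomposes into straight segments inside $\R^n\sm\overline\cO$ and arcs on $\pa\cO$; the boundary arcs must be geodesics of the induced metric on $\pa\cO$, for otherwise one could shorten them within $\pa\cO\su\cO^c$. Propagating backward from $(q,-\tau)$: the free piece is the unique ray in direction $-\tau$ until it first meets $\pa\cO$ at some $q_1$ (with $-\tau$ automatically tangent to $\pa\cO$ by $C^1$-continuity); the geodesic ODE on $\pa\cO$ with Cauchy data $(q_1,-\tau)$ has a unique solution; and the curve exits $\pa\cO$ at the first instant the tangent ray no longer re-enters $\cO$. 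Iterating this alternation until the backward trace reaches $p$ reconstructs $\gamma$ from $\tau$ alone, so two minimizers sharing the tangent $\tau$ at $q$ coincide.

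Combining these steps, Rademacher's theorem implies $d_p$ is differentiable almost everywhere in $\R^n\sm\overline\cO$, so $\mathcal{S}_1$ has full Lebesgue measure, $\mathcal{S}_2$ has measure zero, $\operatorname{int}\mathcal{S}_2=\emptyset$, and the initial set-theoretic observation delivers $\mathcal{S}_2\su\pa\mathcal{S}_1$. I expect the main obstacle to be the rigidity step asserting that the tangent at $q$ determines the whole minimizer: a priori the contact set of a minimizer with $\pa\cO$ could be a complicated closed subset, so the iterative free/boundary reconstruction and the unambiguous determination of exit times must be justified using convexity of $\cO$ and the $C^1$ bounded-curvature regularity provided by \Cref{thm:exist-reg}.
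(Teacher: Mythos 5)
Your route is genuinely different from the paper's and, if completed, proves more. The paper argues via \Cref{lem:unique}: for any minimizer $\bu$ to $q$ and any $q'$ strictly between the last contact point $\bu(b)$ and $q$, the minimizer for $(p,q')$ is unique, so every $q\in\mathcal{S}_2$ is approximated by points of $\mathcal{S}_1$ along the final straight segment of a minimizer; this gives $\mathcal{S}_2\su\pa\mathcal{S}_1$ directly. You instead show that $\mathcal{S}_2$ is Lebesgue-null via Rademacher's theorem applied to the value function, which is strictly stronger than $\operatorname{int}\mathcal{S}_2=\emptyset$ and goes some way toward the paper's closing Conjecture on the Hausdorff dimension of $\mathcal{S}_2$. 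Your first two steps are sound: $d_p$ equals the constrained length distance (constant-speed reparametrization gives $\inf E=(\inf L)^2$), it is $1$-Lipschitz on balls contained in $\cO^c$ by concatenation, and the truncation-plus-chord competitor with $s=h^{-1/3}$ correctly forces $\nabla d_p(q)=\tau$ at every differentiability point, so all minimizers to such a $q$ share their terminal unit tangent.

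The genuine gap is the rigidity step, and the reconstruction recipe you give for it is not the right one. First, you need not rediscover the free/contact decomposition: \Cref{thm:charac} and \Cref{lem:La} (available before \Cref{thm:unique}) already say the contact set is a single arc, so a minimizer is segment $+$ boundary geodesic $+$ segment and there is exactly one alternation to control. Second, and more seriously, the exit point from $\pa\cO$ is \emph{not} ``the first instant the tangent ray no longer re-enters $\cO$''; by $C^1$-regularity it is a parameter $t$ at which the backward tangent line of the boundary geodesic passes through $p$, and a priori several such $t$ could occur along the same maximal geodesic, each yielding a distinct candidate with the same tangent at $q$. To close this: two minimizers sharing $(q,\tau)$ determine the same ray from $q$, hence the same Cauchy data for the geodesic equation on $\pa\cO$, hence lie on one maximal boundary geodesic $\si$ (a flat stretch of $\pa\cO$ along the ray causes no harm, since a straight segment in $\pa\cO$ is itself a geodesic); if they exit at arclengths $\ell_1<\ell_2$, equality of total lengths gives $|\si(\ell_1)-p|=(\ell_2-\ell_1)+|\si(\ell_2)-p|$, so the concatenation of $\si|_{[\ell_1,\ell_2]}$ with the chord from $\si(\ell_2)$ to $p$ is a path from $\si(\ell_1)$ to $p$ of length exactly $|\si(\ell_1)-p|$; since the straight segment is the unique length-minimizer between two points of $\R^n$, that concatenation is the straight segment, the two images coincide, and the two constant-speed parametrizations on $[0,1]$ agree. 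With this inserted, your argument is complete.
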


The derivation of \Cref{thm:unique} relies on a particular characterization of minimizers, necessitating the introduction of the following definition.

\begin{definition}\label{def:vision}
Let $\cO$ be a bounded convex domain in $\R^n$. 
A cone with apex $p$ is called the \textit{vision cone} of $p$ (relative to $\cO$) if it is tangent to $\cO$. The intersection of the vision cone and $\pa\cO$ is called the \textit{vision boundary} of $p$. The vision cone and vision boundary will be denoted by $\mathcal{C}_p$ and $\Ga_p$.
\end{definition}

The characterization of the minimizer can be given as follows:
\begin{theorem}\label{thm:charac}
Any minimizer of \eqref{eq:E} over $\cA$ consists of a geodesic on $\pa\cO$ that connects two vision boundary $\Ga_p$ and $\Ga_q$, with two line segments connecting the points $p$ and $q$ to the vision boundaries $\Ga_p$ and $\Ga_q$, respectively.
\end{theorem}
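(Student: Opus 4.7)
The plan is to analyze a minimizer $\bu$ by partitioning the parameter interval $I$ according to where $\bu$ touches the obstacle. Define the contact set $K := \{t\in I : \bu(t)\in\pa\cO\}$, which is closed by continuity of $\bu$ from \Cref{thm:exist-reg}. On each connected component of $I\setminus K$ the curve lies in the open set $\R^n\setminus\overline\cO$, so $\bu$ is an unconstrained local minimizer of the Dirichlet energy with fixed endpoints; its Euler--Lagrange equation is $\bu''=0$, and hence $\bu$ is an affine segment on each such component.

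Next, I would show that $K$ is a single closed interval $[t_1,t_2]$ contained in $(0,1)$. Non-emptiness follows from the standing hypothesis $l_{pq}\cap\cO\ne\emptyset$: otherwise $\bu$ would reduce to the straight segment from $p$ to $q$, which enters $\cO$. If $K$ had two components separated by an interval $(a,b)$, then $\bu|_{[a,b]}$ is an affine segment joining $\bu(a),\bu(b)\in\pa\cO$; convexity places this segment inside $\overline\cO$, while admissibility forces it into $\cO^c$, so it lies entirely on $\pa\cO$, contradicting $\bu(t)\notin\pa\cO$ on $(a,b)$. Having fixed $K=[t_1,t_2]$, I would then prove that $\bu|_{[t_1,t_2]}$ is a geodesic on $\pa\cO$ by a splicing argument: if a shorter curve $\gamma$ on $\pa\cO$ joined $\bu(t_1)$ to $\bu(t_2)$, then reparameterizing $\gamma$ at constant speed on $[t_1,t_2]$ and substituting it for $\bu|_{[t_1,t_2]}$ would yield, via Cauchy--Schwarz, an admissible $H^1$ competitor of strictly smaller energy, contradicting minimality.

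Finally, I would identify $\bu(t_1)\in\Ga_p$ and $\bu(t_2)\in\Ga_q$. On $[0,t_1]$ the minimizer is the line segment from $p$ to $\bu(t_1)\in\pa\cO$; by the $C^1$ regularity of \Cref{thm:exist-reg}, its tangent direction at $t_1^-$ must coincide with the tangent direction of the geodesic at $t_1^+$, and the latter lies in $T_{\bu(t_1)}\pa\cO$. Hence the segment is tangent to $\pa\cO$ at $\bu(t_1)$; it is therefore a generator of the vision cone $\mathcal{C}_p$, and so $\bu(t_1)\in\Ga_p$. The symmetric argument applies at $t_2$. I expect the most delicate point to be the geodesic claim: one must verify that the spliced competitor lies in $H^1$ and remains admissible. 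Admissibility is immediate since the replacement lies on $\pa\cO\subset\cO^c$, and the $H^1$ property follows from smoothness of $\pa\cO$; the genuine subtlety in the whole argument is invoking the $C^1$ matching from \Cref{thm:exist-reg} to make the tangency identification at the vision boundary rigorous.
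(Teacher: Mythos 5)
Your proposal is correct in substance, but it reaches the characterization by noticeably different arguments than the paper in each of the three main steps, so a comparison is worthwhile. (i) For the claim that the contact set is a single interval, the paper (\Cref{lem:La}) takes a gap component, writes $\bu=\bv+hN$ with $\bv$ the projection onto $\pa\cO$, and shows by an explicit energy computation using convexity ($\bv'\cdot N'\ge 0$) that replacing $\bu$ by $\bv$ strictly decreases the energy. You instead observe that on a gap the curve is an affine segment between two points of $\pa\cO$, hence lies in $\overline\cO$ by convexity and in $\cO^c$ by admissibility, hence on $\pa\cO$ --- a contradiction. Your argument is shorter and avoids the projection computation entirely. (ii) For the geodesic claim, the paper (\Cref{lem:geo}) runs a first-variation computation with variation field $f\,\D_{\bu'}\bu'$ to conclude $\D_{\bu'}\bu'=0$; you splice in a shorter constant-speed curve and invoke Cauchy--Schwarz (the paper's inequality \eqref{ineq:LE}, rescaled to the subinterval, where the factor $1/(t_2-t_1)$ should appear) to contradict minimality. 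Both are valid; yours directly yields the \emph{minimizing} property, which the paper obtains only afterwards from the equality case of \eqref{ineq:LE}. (iii) For $\bu(t_1)\in\Ga_p$, the paper argues by an energy comparison (if $\bu(t_1)$ lay on the near side of $\Ga_p$, one could shortcut by a straight segment to a later crossing of $\Ga_p$), whereas you use the $C^1$ matching from \Cref{thm:exist-reg} to force the incoming segment to be tangent to $\pa\cO$ at $\bu(t_1)$, hence a generator of the vision cone. The tangency route is clean and legitimate, since the $C^1$ regularity is established independently in \Cref{sec:existence}.

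One point you should patch: you never rule out the degenerate case where the contact set is a single point, $t_1=t_2$, and your identification of $\bu(t_1)\in\Ga_p$ explicitly uses the tangent of the geodesic at $t_1^+$, which does not exist in that case. The paper devotes a separate lemma to excluding it. In your framework the fix is one sentence: if $K=\{t_1\}$, the two affine pieces share the same derivative at $t_1$ by $C^1$ regularity, so $\bu$ parametrizes the straight segment $l_{pq}$, which is inadmissible because $l_{pq}\cap\cO\neq\emptyset$. With that addition your proof is complete.
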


\bigskip

The energy minimizers of \eqref{eq:E}, unrestricted to $\cA$, correspond to one-dimensional harmonic maps. Comprehensive overviews on harmonic maps can be found in \cite{SU82_JDG,SU83_JDG,SU84_Invent}. 
Additional free boundary problems that minimize the energy function in \eqref{eq:E} with supplemental terms have been discussed in \cite{ASUW15_AM,CSY18_Duke}.
The notion of energy minimizing constrained maps has been investigated for decades, see \cite{Duzaar87_JRAM,Fuchs89_CPDE}. The optimal regularity of projected maps recently addressed in \cite{FKS23_arxiv}.

An analogous problem for surfaces poses an intriguing question. That is, whether for almost every given curve $\ga$, there exists a unique energy-minimizing surface that takes $\ga$ as its boundary. Here, the term "almost every" used for curves is as described in \cite{Morgan78_Invent}.

The paper is organized as follows: In \Cref{sec:pre}, we recall some preliminaries and summarize known results related to the regularity of minimizers in constrained spaces. The existence of minimizers and the proof of \Cref{thm:exist-reg} are covered in \Cref{sec:existence}. \Cref{sec:optimal} is devoted to examining necessary properties for the characterization of minimizers and includes the proof of \Cref{thm:charac}. Our main result, \Cref{thm:unique}, is proved in \Cref{sec:uniqueness}. Lastly, in \Cref{sec:example}, we discuss the special case where the constraint is the standard sphere, and propose a conjecture regarding the Hausdorff dimension of the non-uniqueness set $\mathcal{S}_2$.

\section{Preliminaries}\label{sec:pre}
Let $A$ denotes the second fundamental form of $\pa\cO$.
The Euler--Lagrange equation for \eqref{eq:E} over $\cA$ is
\begin{align}\label{eq:EL}
\bu'' = A(\bu',\bu')\chi_{\{\bu\in \pa\cO\}}\qu \text{in } I
\end{align}
in the sense of distributions (see \cite{Duzaar87_JRAM}).

Since $\cO$ is convex domain, the projection from $\cO^c$ onto $\pa \cO$ are well-defined. Moreover, since $\pa\cO$ is smooth, the outward unit normal $\nu$ is also well-defined. Given a curve $\bu$ in $\cA$, it can be expressed as
\begin{align}
\bu = \bv + hN,
\end{align}
where $\bv:I\ra\pa\cO$ is the projection of $\bu$ onto $\pa\cO$, $h:I\ra \R$ denotes the distance function $|\bu-\bv|$, and $N$ is the outward unit normal to $\pa\cO$.

Let $\bu:I\ra \pa\cO$ be a curve. Then the length of the curve is given by
\begin{align}
L(\bu)= \int_I |\bu'(t)| dt
\end{align}
By using the Cauchy--Schwarz inequality,
\begin{align}\label{ineq:LE}
L(\bu)^2 \le E(\bu)
\end{align}
and equality occurs if and only if $|\bu'(t)|$ is constant. Using these, if a curve $\mathbf{w}$ minimizes energy, then $\textbf{w}$ minimizes its length.

We close this section by recalling regularity results for minimizers in constrained spaces.
\begin{thmx}[\cite{FKS23_arxiv}]\label{thm:reg}
Let $u\in \cA$ be a energy minimizer of \eqref{eq:E} over $\cA$. Then $u\in C^{1,1}_{loc}(I)$.
\end{thmx}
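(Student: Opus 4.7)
The plan is to derive $C^{1,1}_{loc}$ regularity directly from the Euler--Lagrange equation \eqref{eq:EL}, using a Lipschitz bound on $\bu$ obtained from the reparameterization invariance of $\cA$. With both ingredients in hand, the proof becomes a short bootstrap.

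As a first step, I would show that any energy minimizer has constant speed, hence $\bu\in W^{1,\infty}(I)$. For any increasing diffeomorphism $\phi:I\to I$ with $\phi(0)=0$ and $\phi(1)=1$, the reparameterization $\tilde\bu:=\bu\circ\phi$ still lies in $\cA$ (the image is preserved and the endpoints are unchanged), and the Cauchy--Schwarz inequality \eqref{ineq:LE} gives $E(\tilde\bu)\ge L(\tilde\bu)^2 = L(\bu)^2$, with equality iff $|\tilde\bu'|$ is constant. Choosing $\phi$ to realize this equality and invoking minimality of $\bu$ forces $E(\bu)=L(\bu)^2$, whence $|\bu'|$ is constant a.e. Equivalently, this is an inner-variation argument: reparameterizations preserve the obstacle constraint, so inner variations are unrestricted and yield the same conclusion.

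Second, a one-line bootstrap: since $\pa\cO$ is smooth and compact, the second fundamental form $A$ is uniformly bounded on $\pa\cO$, so the right-hand side of \eqref{eq:EL} satisfies
\[
\left|A(\bu'(t),\bu'(t))\,\chi_{\{\bu(t)\in\pa\cO\}}\right| \le \|A\|_{L^\infty(\pa\cO)}\,\|\bu'\|_{L^\infty(I)}^2 \quad \text{for a.e. } t\in I.
\]
Hence $\bu''\in L^\infty(I)$ as a distribution, which integrates to $\bu'\in W^{1,\infty}(I)$, i.e., $\bu\in C^{1,1}_{loc}(I)$.

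The main obstacle I anticipate is upstream of this bootstrap, namely the rigorous derivation of \eqref{eq:EL} under only the $H^1$ a priori regularity of $\bu$. Outer variations must preserve the unilateral constraint $\bu(t)\in\cO^c$, which a priori yields only a variational inequality; upgrading to the stated pointwise equation on the contact set $\{\bu\in\pa\cO\}$ requires decomposing test directions into tangential and normal components with respect to $\pa\cO$ and using the smoothness of the nearest-point projection onto $\pa\cO$ afforded by convexity. This is essentially the content of \cite{Duzaar87_JRAM}. A minor subtlety in the constant-speed reduction is excluding that $\bu'$ vanishes on a set of positive measure; this is handled because a minimizer cannot be constant on any subinterval when $p\ne q$.
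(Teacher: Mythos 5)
A preliminary remark on the comparison you were asked for: the paper does not prove this statement at all --- it is imported verbatim from \cite{FKS23_arxiv} and used as a black box --- so there is no in-paper argument to measure your proposal against, and it has to stand on its own.

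On its own terms, your first step (constant speed via reparameterization, hence $\bu\in W^{1,\infty}(I)$) is correct, and it is the same inner-variation idea the paper gestures at after \eqref{ineq:LE}. The genuine gap is in the second step, and you have in fact located it yourself but underestimated its weight: the ``bootstrap'' from \eqref{eq:EL} is essentially circular. The identity $\bu''=A(\bu',\bu')\chi_{\{\bu\in\pa\cO\}}$, read as a distributional equation whose right-hand side is a bounded measurable function, already asserts that the distribution $\bu''$ is represented by an $L^\infty$ function --- which is precisely the conclusion $\bu\in C^{1,1}_{loc}(I)$ you are trying to prove. What minimality over the convex set $\cA$ actually yields, via one-sided outer variations, is only a variational inequality; from it, $\bu''$ is a priori a vector-valued Radon measure (a nonnegative measure times the inward normal, concentrated on the contact set $\{\bu\in\pa\cO\}$), and the entire content of the theorem is to show that this measure has no singular part and has density bounded by $C|\bu'|^2$, with $C$ controlled by the principal curvatures of $\pa\cO$. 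That step --- typically carried out by barrier or comparison arguments exploiting the bounded curvature of the convex obstacle, or by penalization --- is exactly what you defer to \cite{Duzaar87_JRAM}, so your argument is a reduction of the theorem to \eqref{eq:EL} rather than a proof of it. Note also that once \eqref{eq:EL} is granted with even an $L^1$ right-hand side, your constant-speed step becomes superfluous: $|\bu'|^2\in L^1(I)$ gives $\bu''\in L^1(I)$, hence $\bu'$ is absolutely continuous and therefore bounded on the compact interval $I$, after which one bootstraps to $\bu''\in L^\infty(I)$ directly. This again signals that all the analytic difficulty lives in justifying \eqref{eq:EL} in strong form, not downstream of it.
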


\section{Existence of minimizer}\label{sec:existence}

In this section, we establish the existence of a minimizer for \eqref{eq:E} over the admissible set $\cA$ defined in \eqref{eq:cA} and present the proof for \Cref{thm:exist-reg}.

\begin{lemma}\label{lem:exist}
There exists a solution $\bu:[0,1]\ra \cO^c$ minimizing energy \eqref{eq:E} over the admissible set $\mathcal{A}$.
\end{lemma}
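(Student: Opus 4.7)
The plan is to apply the direct method of the calculus of variations. The argument will have three ingredients: (i) nonemptiness of $\cA$, (ii) compactness of a minimizing sequence, and (iii) lower semicontinuity of $E$ together with closedness of the constraint.

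First I would check that $\cA \neq \emptyset$ so that $\inf_{\cA} E < \infty$. Since $\cO$ is a bounded open convex set with smooth boundary and $p,q \in \R^n \setminus \overline\cO$, one can construct an explicit admissible curve by, for example, concatenating the two line segments from $p$ and $q$ to well-chosen points on the vision boundaries $\Ga_p$ and $\Ga_q$ with a path along $\pa\cO$ connecting those points (any smooth path works for existence, since we only need finite length). A reparametrization at constant speed gives an element of $\cA$ with finite energy, so $m := \inf_{\cA} E \in [0, \infty)$.

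Next I would take a minimizing sequence $\{\bu_k\} \subset \cA$ with $E(\bu_k) \to m$. Since $\bu_k(0) = p$ is fixed, Poincaré's inequality together with the energy bound yields a uniform bound $\norm{\bu_k}_{H^1(I,\R^n)} \le C$. By reflexivity of $H^1$ we extract a subsequence (still denoted $\bu_k$) with $\bu_k \rightharpoonup \bu$ weakly in $H^1(I,\R^n)$. The compact embedding $H^1(I,\R^n) \hookrightarrow C^0(I,\R^n)$ (Morrey/Sobolev in one dimension) then gives $\bu_k \to \bu$ uniformly on $I$.

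Finally I would verify that $\bu \in \cA$ and that $\bu$ minimizes. Uniform convergence preserves the boundary conditions $\bu(0)=p$, $\bu(1)=q$ since these are pointwise evaluations. The obstacle constraint $\bu(t) \in \cO^c$ is preserved because $\cO^c = \R^n \setminus \cO$ is closed (as $\cO$ is open), so the uniform limit of $\cO^c$-valued continuous maps stays in $\cO^c$. Thus $\bu \in \cA$. Weak lower semicontinuity of the $L^2$ norm applied to the derivatives gives
\begin{align}
E(\bu) = \int_I |\bu'|^2 \, dt \le \liminf_{k\to\infty} \int_I |\bu_k'|^2 \, dt = m,
\end{align}
so $\bu$ achieves the infimum. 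The only mildly nontrivial step is the nonemptiness of $\cA$, which is handled by the geometric construction above; the remaining steps are standard, as the convexity of $|\cdot|^2$ gives weak lower semicontinuity and the closedness of $\cO^c$ gives stability of the pointwise constraint under uniform limits.
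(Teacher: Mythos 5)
Your proposal is correct and follows essentially the same route as the paper: the direct method with a minimizing sequence, a uniform $H^1$ bound via Poincar\'e, weak compactness, closedness of $\cO^c$ to preserve the constraint, and weak lower semicontinuity of $E$. The only cosmetic differences are that the paper verifies nonemptiness of $\cA$ with a generic smooth curve avoiding the bounded set $\cO$ rather than your explicit vision-boundary construction, and it passes the constraint to the limit via almost-everywhere convergence rather than the uniform convergence from the compact embedding $H^1(I)\hookrightarrow C^0(I)$.
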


\begin{proof}
Since $\cO$ is bounded, any two points outside $\cO$ are connected by a smooth curve in $\cO^c$, which induces that $\cA$ is not empty. We now take a minimizing sequence $\bu_k \in \cA$ such that 
\begin{align}
\lim_{k\ra \infty} E(\bu_k) = \inf_{\bv\in\cA} E(\bv).
\end{align}

Let $c:I\ra \cO^c$ be a smooth curve connecting two points $p$ and $q$. Since $c\in \cA$, we have $\inf_{\bv\in\cA}E(\bv)\le E(c)<\infty$, and thus $E(\bu_k)$ is uniformly bounded. Using the Poincar\`e inequality, $\norm{\bu_k}_{H^1}$ is also uniformly bounded. Then there exists a function $\bu\in H^1(I;\R^n)$ such that a subsequence of $\bu_k$ converges weakly to $\bu$ in $H^1(I;\R^n)$ and converges (strongly) to $\bu$ almost everywhere in $I$. We still denote by $\bu_k$ the converging subsequence. 

Clearly, $\bu\in \cA$ since $\bu_k(0)=p$, $\bu_k(1)=q$, and $\bu_k(I)$ is contained in the closed set $\cO^c$. Moreover, since $E$ is weakly lower semicontinuous on $H^1(I;\R^n)$, we have
\begin{align}
E(\bu)\le\liminf_{k\ra\infty} E(\bu_k)=\inf_{\bv\in\cA}E(\bv)\le E(\bu)
\end{align}
which conclude that $\bu$ is a minimizer of $E$ on $\cA$.
\end{proof}

\begin{proof}[Proof of \Cref{thm:exist-reg}]
The result follows from \Cref{thm:reg}, \Cref{lem:exist}, and the fact that $\bu''=0$ holds away from the obstacle, as stated in \eqref{eq:EL}.
\end{proof}

\section{Proof of \Cref{thm:charac}}\label{sec:optimal}
Let $\La$ and $\Om$ be the coincidence set and the non-coincidence set, i.e., $\La=\{t\in I:\bu(t)\in \pa\mathcal{O}\}$ and $\Om=I\sm \La=\{t\in I:\bu(t)\not\in \pa\cO\}$. For convenience, we assume that the origin lies in $\cO$. We also assume that the line segment $l_{pq}$ passing through $p$ and $q$ intersects with $\cO$. Otherwise, the minimizer is exactly the line segment $l_{pq}$ passing through $p$ and $q$.

\begin{lemma}\label{lem:La}
The coincidence set $\La$ is of the form $[a,b]$ for some $a,b \in (0,1)$.
\end{lemma}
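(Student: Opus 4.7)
The plan is to identify $\Lambda$ as the complement of the portion of the curve behaving as a free geodesic (straight line) in $\R^{n}$, and to show that $\Omega=I\setminus\Lambda$ cannot contain a component whose closure misses $\{0,1\}$. By continuity of $\bu$, the set $\Lambda$ is closed in $I$, while the hypothesis $p,q\in\R^{n}\setminus\overline{\cO}$ forces $0,1\in\Omega$. Hence $\Omega$ is an open neighbourhood of $\{0,1\}$ in $I$, and it suffices to prove (a) $\Omega$ has at most two connected components, namely one containing $0$ and one containing $1$, and (b) $\Lambda$ is non-empty.

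For (a), suppose for contradiction that $(t_{1},t_{2})$ is a connected component of $\Omega$ with $0<t_{1}<t_{2}<1$. Then by definition $\bu(t_{1}),\bu(t_{2})\in\pa\cO$, while the Euler--Lagrange equation \eqref{eq:EL} gives $\bu''\equiv 0$ on $(t_{1},t_{2})$. Thus $\bu$ is affine on $[t_{1},t_{2}]$, so its image is the closed line segment joining $\bu(t_{1})$ and $\bu(t_{2})$. By convexity of $\cO$ this segment lies in $\overline{\cO}$. Now split into two cases: if the segment meets the open set $\cO$, then $\bu$ leaves $\cO^{c}$, contradicting admissibility $\bu\in\cA$; if the segment lies entirely in $\pa\cO$, then every $t\in(t_{1},t_{2})$ lies in $\Lambda$, contradicting $(t_{1},t_{2})\subset\Omega$. (The degenerate possibility $\bu(t_{1})=\bu(t_{2})$ is subsumed by the second case.) Therefore no such component exists, and $\Omega=[0,a)\cup(b,1]$ for some $0<a\le b<1$, giving $\Lambda=[a,b]$ provided $\Lambda\ne\emptyset$.

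For (b), if $\Lambda=\emptyset$ then $\Omega=I$ and the same argument based on \eqref{eq:EL} shows $\bu$ is affine on all of $I$, so the image of $\bu$ is exactly the segment $l_{pq}$. The standing hypothesis $l_{pq}\cap\cO\ne\emptyset$ then contradicts $\bu(I)\subset\cO^{c}$, so $\Lambda\ne\emptyset$.

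The only delicate point is the dichotomy in step (a): a smooth bounded convex domain can have flat portions on its boundary, so a chord between two points of $\pa\cO$ need not enter the interior. I expect this to be the main (mild) subtlety, and it is handled by observing that in the "flat" alternative the chord itself sits in $\pa\cO$, which collapses the supposed component of $\Omega$ into $\Lambda$.
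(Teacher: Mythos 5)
Your proof is correct, but it takes a genuinely different route from the paper's. The paper never invokes the Euler--Lagrange equation here: it argues purely variationally, writing $\bu=\bv+hN$ on a putative interior component $(a_k,b_k)$ of $\Om$ (with $\bv$ the projection onto $\pa\cO$ and $h=|\bu-\bv|>0$ inside), and computing $E(\bu)-E(\bv)>0$ on that interval via $\bv'\cdot N'=-\bv''\cdot N\ge 0$ (convexity) and $\int|h'|^2>0$; replacing $\bu$ by $\bv$ then strictly lowers the energy, a contradiction. You instead use \eqref{eq:EL} to conclude $\bu''=0$ on the component, so the excursion is a chord between two points of $\pa\cO$, which by convexity of $\overline\cO$ either enters $\cO$ (violating admissibility) or lies in $\pa\cO$ (so the component was not in $\Om$ after all). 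Your argument is shorter and is legitimate given that the paper accepts \eqref{eq:EL} as established (indeed it uses exactly this ``$\bu''=0$ off the coincidence set'' reasoning in the very next lemma); the paper's argument buys independence from the Euler--Lagrange equation and a quantitative energy gap, at the cost of the projection computation. Two small credits to your write-up: you explicitly rule out $\La=\emptyset$ using the standing hypothesis $l_{pq}\cap\cO\ne\emptyset$, and you explicitly handle the flat-boundary alternative, both of which the paper leaves implicit.
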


\begin{proof}
Since the non-coincidence set $\Om=\{t\in[0,1]: \bu(t)\in \R^n\sm \overline{\mathcal{O}}\}$ is open in $[0,1]$ and $\{0,1\}\su \Om$, the non-coincidence set $\Om$ is of the form 
\begin{align}
\Om=[0,a)\cup \bigcup_{k=1}^m(a_k,b_k)\cup(b,1]
\end{align}
for some $m\in \mathbb{N}\cup \{0,\infty\}$. We claim that $m=0$. If $m>0$, then for each $(a_k,b_k)$, it holds
\begin{align}
\bu(t)\notin \cO \text{ for } t\in (a_k,b_k) \qu \text{and} \qu \bu(a_k),\bu(b_k)\in \pa\cO.
\end{align}

Let $\bv:[a_k,b_k]\ra \pa \cO$ be the projected curve of $\bu\vert_{[a_k,b_k]}$ into $\pa\cO$. Then, we can express $\bu$ as $\bu=\bv+hN$, where $N$ is the outward unit normal to $\pa\cO$ and $h:[0,1] \ra \R$ is a nonnegative function satisfying $h(a_k)=h(b_k)=0$ and $h(t)>0$ for $a_k<t<b_k$ so that
\begin{align}
\int_{a_k}^{b_k} |h'|^2>0.
\end{align}
Hence, noting $\bv'\cdot N=0$ and $N'\cdot N=0$, we have
\begin{equation}\label{eq:Eu-Ev}
\begin{split}
\int_{a_k}^{b_k}|\bu'(t)|^2-|\bv'(t)|^2&=2\int_{a_k}^{b_k} \bv'\cdot (hN)' +\int_{a_k}^{b_k} |(hN)'|^2
\\
&=
2\int_{a_k}^{b_k} (\bv'\cdot N')h+\int_{a_k}^{b_k} |h'|^2 + h^2|N'|^2 >0
\end{split}
\end{equation}
since $\bv'\cdot N'=-\bv''\cdot N \ge 0$ by the convexity of the obstacle $\cO$ and $h> 0$ in $(a_k,b_k)$. 

Let $\tilde{\bu}$ be the function derived from $\bu$ by replacing $\bu$ with $\bv$ over the interval $(a_k,b_k)$. Then it follows from \eqref{eq:Eu-Ev} that $E(\tilde{\bu}) < E(\bu)$. This contradicts the assumption that $\bu$ minimizes energy. Therefore, we must have $m=0$, which implies that $\Omega = [0, a) \cup (b,1]$ and $\La=[a,b]$, as desired.
\end{proof}

Note that we will show $a<b$ by combining the fact $\bu([0,a])$ and $\bu([b,1])$ lie on the vision cones $\mathcal{C}_p$ and $\mathcal{C}_q$, respectively.

\begin{lemma}\label{lem:u-Ga}
Let $\Ga_p$ be the vision boundary of $p$ with respect to $\cO$ (\Cref{def:vision}). Then $\bu(a) \in \Ga_p$ and $\bu([0,a])$ is the line segment connecting $p$ and $\bu(a)$. Similarly, $\bu(b) \in \Ga_q$ and $\bu([b,1])$ is the line segment connecting $\bu(b)$ and $q$.
\end{lemma}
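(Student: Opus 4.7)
The plan is to combine the Euler--Lagrange equation \eqref{eq:EL} on the non-coincidence set with the $C^1$ regularity from \Cref{thm:exist-reg}. First, since $\La=[a,b]$ by \Cref{lem:La}, the interval $[0,a)$ lies in the non-coincidence set, so \eqref{eq:EL} forces $\bu''=0$ there. Thus $\bu$ is affine on $[0,a]$, and because $\bu(0)=p$ and $\bu(a)\in\pa\cO$, the image $\bu([0,a])$ is exactly the straight segment from $p$ to $\bu(a)$; the same reasoning gives $\bu([b,1])$ as the segment from $\bu(b)$ to $q$.

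Next I rule out $a=b$. If $a=b$, then $\bu$ consists of two segments joined at $\bu(a)\in\pa\cO$, and by $C^1$ regularity the one-sided tangents $\bu'(a^-)$ and $\bu'(a^+)$ coincide. Since these are positive multiples of $\bu(a)-p$ and $q-\bu(a)$ respectively, the points $p$, $\bu(a)$, $q$ are collinear and $\bu$ parametrizes the line segment $l_{pq}$. This contradicts the standing hypothesis $l_{pq}\cap\cO\neq\emptyset$, since $\bu$ must avoid $\cO$. Hence $a<b$.

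With $a<b$ in hand, I identify the tangent direction at the contact time. For $t\in(a,b)$, the minimizer lies on $\pa\cO$, so applying a local smooth defining function $\rho$ of $\pa\cO$ and differentiating $\rho\circ\bu\equiv 0$ yields $\nabla\rho(\bu(t))\cdot\bu'(t)=0$. Passing $t\to a^+$ and using the $C^1$ continuity of $\bu$, I obtain $\nabla\rho(\bu(a))\cdot\bu'(a)=0$, so $\bu'(a)$ lies in the tangent space $T_{\bu(a)}\pa\cO$. But $\bu'(a)=\bu'(a^-)$ is a positive multiple of $\bu(a)-p$, so the line from $p$ to $\bu(a)$ is tangent to $\pa\cO$ at $\bu(a)$, which by \Cref{def:vision} is precisely the statement $\bu(a)\in\Ga_p$. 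The assertion $\bu(b)\in\Ga_q$ is entirely symmetric.

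The step I expect to require the most care is the exclusion of $a=b$, since this is where the hypothesis $l_{pq}\cap\cO\neq\emptyset$ enters in an essential way: without it, a line segment from $p$ to $q$ that is tangent to $\cO$ at a single point would itself be an admissible minimizer and the vision-boundary description would fail in a trivial way. Everything else reduces to the ODE $\bu''=0$ off the obstacle together with the matching of one-sided derivatives provided by \Cref{thm:exist-reg}.
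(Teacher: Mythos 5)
Your proof is correct, but it reaches the key conclusion $\bu(a)\in\Ga_p$ by a genuinely different route than the paper. The paper's argument is variational: it notes that $\Ga_p$ separates $\pa\cO$ into a piece visible from $p$ and a piece that is not, that $\bu(a)$ must lie in the closure of the visible piece while $\bu(b)$ lies in the other piece, and then, assuming $\bu(a)\notin\Ga_p$, it finds a time $t_0\in(a,b)$ with $\bu(t_0)\in\Ga_p$ and replaces $\bu\vert_{[0,t_0]}$ by the straight segment from $p$ to $\bu(t_0)$, strictly decreasing the energy. You instead extract the first-order tangency condition from the regularity of \Cref{thm:reg}: differentiating the constraint $\rho\circ\bu\equiv 0$ on the coincidence set and matching one-sided derivatives at $t=a$ forces $\bu(a)-p\in T_{\bu(a)}\pa\cO$, which identifies $\bu(a)$ as a point of $\Ga_p$. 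Both arguments are sound. Yours leans on the imported $C^{1,1}$ regularity (the paper's comparison argument does not need $C^1$ regularity at this stage), but in exchange it is more local and delivers $a<b$ en route --- a fact the paper defers to a separate, logically later lemma proved differently (namely, $a=b$ would place both $p$ and $q$ on the tangent plane at $\bu(a)$, contradicting $l_{pq}\cap\cO\neq\emptyset$). Your ordering is the right one for your method, since differentiating the constraint on $(a,b)$ is vacuous when $a=b$; the only point worth a sentence more is the identification of ``the segment $p\,\bu(a)$ is tangent to $\pa\cO$ at $\bu(a)$'' with ``$\bu(a)\in\Ga_p$,'' which uses the standard description of the vision cone of a smooth convex body as the union of the tangent rays from $p$ and is implicit, rather than explicit, in \Cref{def:vision}.
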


\begin{proof}
On the non-coincidence set $[0,a)$, it follows from the Euler--Lagrange equation that $\bu''=0$. Thus $\bu([0,a])$ is the line segment connecting $p$ and $\bu(a)$.

To prove $\bu(a)\in \Ga_p$, we note that $\Ga_p$ divides $\pa\cO$ into two parts. Let us denote the piece that is closer to the point $p$ as $\pa\cO_p$.
If $\bu(a)\not\in \Ga_p$, then $\bu(a)\in \pa\cO_p$.
Since $l_{pq}\cap \cO\not=\emptyset$, we see that $\bu(b)$ lies in the other piece of $\pa\cO$. Thus there exists $t_0\in (a,b)$ such that $\bu(t_0)\in \Ga_p$. However, replacing $\bu([0,t_0])$ with the line segment connecting $p$ and $\bu(t_0)$ reduces energy, which is a contradiction. Therefore, $\bu(a)\in \Ga_p$. 

The same argument shows the similar statement for $q$.
\end{proof}

\begin{lemma}
The coincidence set $\La$ is not a single point, i.e., $a<b$.
\end{lemma}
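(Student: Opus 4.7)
The plan is to argue by contradiction: suppose $a=b$, so that by \Cref{lem:u-Ga} the minimizer $\bu$ consists of just two line segments, $[p,\bu(a)]$ and $[\bu(a),q]$, meeting at the single point $\bu(a)\in \Ga_p\cap \Ga_q\su \pa\cO$. The strategy is to use the $C^1$ regularity from \Cref{thm:exist-reg} to force these two segments to be collinear, and then to exploit the tangency of the vision cone to contradict the standing assumption $l_{pq}\cap \cO\not=\emptyset$.

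First I would make the parametrizations explicit. On $[0,a]$ the Euler--Lagrange equation \eqref{eq:EL} gives $\bu''=0$, so $\bu(t)=p+\tfrac{t}{a}(\bu(a)-p)$, and hence the left one-sided derivative at $a$ equals $\tfrac{\bu(a)-p}{a}$. Similarly, on $[b,1]=[a,1]$ the right one-sided derivative at $a$ equals $\tfrac{q-\bu(a)}{1-a}$. The $C^1$ matching $\bu'(a^-)=\bu'(a^+)$ then yields, after a direct algebraic manipulation, $\bu(a)=(1-a)p+aq$; in particular $p$, $\bu(a)$, and $q$ are collinear, and $\bu(a)$ lies on the segment $l_{pq}$.

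With $l_{pq}$ and the line through $p$ and $\bu(a)$ now identified, I would invoke \Cref{def:vision}: since $\bu(a)\in \Ga_p$, the generator of the vision cone $\mathcal{C}_p$ joining $p$ to $\bu(a)$ is tangent to $\pa\cO$ at $\bu(a)$. Consequently $l_{pq}$ itself is tangent to $\pa\cO$ at $\bu(a)$. Because $\cO$ is convex and lies on one side of its tangent hyperplane at $\bu(a)$, the tangent line $l_{pq}$ does not enter the open set $\cO$, giving $l_{pq}\cap \cO=\emptyset$ and contradicting the standing hypothesis.

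The only step requiring mild care is the $C^1$ matching --- specifically, verifying that the one-sided derivatives computed on $[0,a]$ and $[b,1]$ really are the one-sided derivatives of $\bu$ at $a$, which is immediate from the explicit affine form on each piece together with the global $C^1$ regularity of \Cref{thm:exist-reg}. Everything else reduces to elementary convex geometry, so I do not expect any serious obstacle.
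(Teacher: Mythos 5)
Your proof is correct, and it arrives at the same contradiction as the paper (tangency at $\bu(a)$ forces $l_{pq}\cap\cO=\emptyset$, violating the standing hypothesis), but by a slightly different intermediate route. The paper does not use the $C^1$ matching at all: it simply notes that, with $a=b$, the point $\bu(a)$ lies in $\Ga_p\cap\Ga_q$ by \Cref{lem:u-Ga}, so the generators of \emph{both} vision cones $\mathcal{C}_p$ and $\mathcal{C}_q$ through $\bu(a)$ are tangent lines and hence lie in the unique tangent plane of $\pa\cO$ at $\bu(a)$; thus $p$ and $q$ both sit on that tangent plane, and $l_{pq}$, being contained in it, misses the open convex set $\cO$. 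You instead use the tangency only on the $p$-side and substitute the $C^1$ regularity of \Cref{thm:exist-reg} to force $p$, $\bu(a)$, $q$ to be collinear with $\bu(a)=(1-a)p+aq$ on the segment; your derivative computation and the conclusion are both sound (the only small point worth making explicit is that a line through a boundary point of a smooth convex body that does not enter the interior must lie in the tangent hyperplane, which is what lets you pass from ``tangent generator'' to ``$l_{pq}$ misses $\cO$''). The trade-off is that your argument invokes the regularity theorem where the paper's argument is purely elementary convex geometry; on the other hand, yours needs the tangency information only at $p$, not at both endpoints.
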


\begin{proof}
Suppose $a=b$. Given the smoothness of $\partial \cO$, $\bu(a)$ has a unique tangent plane of $\pa\cO$ at $\bu(a)$. By virtue of \Cref{lem:u-Ga}, both points $p$ and $q$ are situated on this tangent plane. This, however, leads to a contradiction, as it implies that the line segment $l_{pq}$ has no intersection with $\cO$.
\end{proof}

To prove \Cref{thm:charac}, it remains to show that the coincidence parts of $\bu$ is a geodesic on the obstacle.

\begin{lemma}\label{lem:geo}
The restriction $\bu\vert_{[a,b]}$ is a minimizing geodesic joining $\bu(a)$ and $\bu(b)$ on $\pa\cO$.
\end{lemma}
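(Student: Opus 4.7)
The plan is to leverage the fact that $\bu$ minimizes energy over all of $\cA$ to show that $\bu|_{[a,b]}$ must minimize length, not merely energy, among curves on $\pa\cO$ joining $\bu(a)$ and $\bu(b)$; the bridge between the two notions is the Cauchy--Schwarz bound \eqref{ineq:LE}.

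First I would carry out a localization. Given any $H^1$ competitor $\ga:[a,b]\ra\pa\cO$ with $\ga(a)=\bu(a)$ and $\ga(b)=\bu(b)$, define $\tilde\bu$ by keeping $\bu$ on $[0,a]\cup[b,1]$ and replacing it by $\ga$ on $[a,b]$. The glued curve is continuous, piecewise $H^1$, lies in $\cO^c$, and satisfies the endpoint conditions, so $\tilde\bu\in\cA$. Minimality of $\bu$ then yields $\int_a^b|\bu'|^2\le \int_a^b|\ga'|^2$, i.e.\ $\bu|_{[a,b]}$ is an energy minimizer among $H^1$ paths on $\pa\cO$ with the prescribed endpoints.

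Second, I would convert this energy minimality into length minimality by two applications of \eqref{ineq:LE} on $[a,b]$. Let $L_0$ denote the intrinsic geodesic distance from $\bu(a)$ to $\bu(b)$ on the smooth compact hypersurface $\pa\cO$. For any competing $\ga$ on $\pa\cO$ with these endpoints, reparametrizing at constant speed on $[a,b]$ turns Cauchy--Schwarz into the equality $\int_a^b|\ga'|^2=L(\ga)^2/(b-a)$. Taking the infimum over such $\ga$ shows that the minimal energy of paths on $\pa\cO$ joining $\bu(a)$ to $\bu(b)$ equals $L_0^2/(b-a)$, and hence $\int_a^b|\bu'|^2\le L_0^2/(b-a)$. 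Applying \eqref{ineq:LE} the other way on $[a,b]$ gives
\begin{align*}
L(\bu|_{[a,b]})^2\le (b-a)\int_a^b|\bu'|^2\le L_0^2,
\end{align*}
which forces $L(\bu|_{[a,b]})=L_0$. Thus $\bu|_{[a,b]}$ realizes the geodesic distance on $\pa\cO$ and is therefore a minimizing geodesic, as claimed.

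I do not anticipate any serious obstacle; the only mildly technical point is verifying that the cut-and-paste competitor $\tilde\bu$ is admissible, but this is standard in one dimension where $H^1\hookrightarrow C^0$ and matching boundary values at $a$ and $b$ suffice for global $H^1$-regularity. As a byproduct of the equality case of \eqref{ineq:LE}, the argument also shows that $|\bu'|$ is constant on $[a,b]$, which is consistent with the characterization in \Cref{thm:charac}.
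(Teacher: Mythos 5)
Your argument is correct, but it takes a genuinely different route from the paper. The paper proves \Cref{lem:geo} by a first-variation computation: it perturbs $\bu\vert_{[a,b]}$ along the variation field $V=f\,\D_{\bu'}\bu'$ (with $f>0$ vanishing at the endpoints), integrates by parts in $E'(0)=0$ to get $\int_a^b f|\D_{\bu'}\bu'|^2=0$, and concludes the geodesic equation $\D_{\bu'}\bu'=0$; the \emph{minimizing} property is then dispatched in one sentence by appealing to the equality case of \eqref{ineq:LE}. You instead bypass the Euler--Lagrange equation on the surface entirely: the cut-and-paste localization shows $\bu\vert_{[a,b]}$ minimizes energy among $H^1$ paths on $\pa\cO$ with the given endpoints, and the two-sided use of Cauchy--Schwarz (together with the fact that the intrinsic distance $L_0$ is realized by a constant-speed geodesic on the compact hypersurface $\pa\cO$, so the minimal energy is exactly $L_0^2/(b-a)$) pins down $L(\bu\vert_{[a,b]})=L_0$ and $|\bu'|\equiv\mathrm{const}$. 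What your approach buys is that it delivers the global statement ``minimizing geodesic'' directly and in one stroke, effectively filling in the step that the paper leaves terse, and it requires no pointwise meaning for $\D_{\bu'}\bu'$ (hence no appeal to the $C^{1,1}$ regularity of \Cref{thm:reg} at this stage). What the paper's variational route buys is the local differential characterization $\D_{\bu'}\bu'=0$, which is the standard mechanism and meshes with the Euler--Lagrange equation \eqref{eq:EL}. The only points you should make explicit are (i) that $L_0$ is attained (Hopf--Rinow on the compact connected hypersurface $\pa\cO$), which you use when asserting the minimal energy equals $L_0^2/(b-a)$, and (ii) the final reparametrization remark: equality in Cauchy--Schwarz forces $|\bu'|$ constant on $[a,b]$, so the length-minimizing curve is genuinely a geodesic and not merely a reparametrization of one --- you do note this, so the argument is complete.
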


\begin{proof}
Let $f:[a,b]\ra\R$ be a function with $f(t)>0$ in $(a,b)$ and $f(a)=f(b)=0$, and let $V(t)=f(t) \D_{\bu'}\bu'$.
Consider a variation $F:(-\e,\e)\ti [a,b]\ra \pa\cO$ of $\bu\vert_{[a,b]}$ having $V(t)$ as variational field, i.e., $F(0,t)=\bu\vert_{[a,b]}$, $F(\cdot,a)\equiv\bu(a)$, $F(\cdot,b)\equiv\bu(b)$, and $\fr{\pa F}{\pa s}(0,\cdot)=V$.

Let $E(s)$ be the energy of the perturbed curve $F(s,\cdot)$. By differentiating 
\begin{align}
E(s) = \int_a^b |\pa_tF(s,t)|^2 dt
\end{align}
with respect to $s$, we have
\begin{align}
\pa_s \int_a^b |\pa_tF(s,t)|^2 dt = 2\int_a^b \inn{\D_{s}\pa_t F}{\pa_t F}dt=2\int_a^b \inn{\D_{t}\pa_s F}{\pa_t F}dt.
\end{align}
Using the integration by parts, we obtain
\begin{align}
\int_a^b \inn{\D_{t}\pa_s F}{\pa_t F}dt
&=
\int_a^b \pa_t\inn{\pa_s F}{\pa_t F}dt - \int_a^b \inn{\pa_s F}{\D_{t}\pa_t F}dt
\\
&=\inn{\pa_s F}{\pa_t F}\vert_a^b - \int_a^b \inn{\pa_s F}{\D_{t}\pa_t F}dt.
\end{align}
Taking $s=0$, we conclude that
\begin{align}
0=\tfrac{1}{2}E'(0)&= \inn{V(b)}{\bu(b)}-\inn{V(a)}{\bu(a)}- \int_a^b \inn{V(t)}{\D_{\bu'}\bu'}dt
\\
&=-\int_a^b f(t)|\D_{\bu'}\bu'|^2dt.
\end{align}
Therefore, we have $\D_{\bu'}\bu'=0$ in $(a,b)$, meaning that $\bu$ is indeed a geodesic.

The fact that this geodesic is minimizing follows from the equality condition given in \eqref{ineq:LE}. This completes the proof.
\end{proof}

\begin{proof}[Proof of \Cref{thm:charac}]
The proof follows from \Cref{lem:La}-\Cref{lem:geo}.
\end{proof}

\section{Proof of \Cref{thm:unique}}\label{sec:uniqueness}
The minimizer of \eqref{eq:E} is not unique in general. For example, when the obstacle $\cO$ is the unit ball in $\R^n$ such that $\pa\cO=\bS^{n-1}$, the number of solutions can be characterized as follows:
\begin{enumerate}
\item If the line passing through two points $p$ and $q$ passes the origin, then there are infinitely many minimizers.
\item Otherwise, there exists only one minimizer.
\end{enumerate}
As highlighted in the introduction, this example hints at the potential rarity of configurations for $p$ and $q$ that yield non-unique minimizers.

In this section, we prove a stronger statement than \Cref{thm:unique}. 

\begin{lemma}\label{lem:unique}
Fix a point $p$ in $\cO^c$. Assume that $\bu\in \cA$ is a minimizer of \eqref{eq:E}. Then the energy-minimizing curve in $\cA$ with respect to $p$ and $q'$, where $q'$ is an intermediate point between $\bu(b)$ and $q$, is unique.
\end{lemma}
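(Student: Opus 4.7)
The plan is to take any energy-minimizer $\bv\in\cA$ from $p$ to $q'$ and show it must coincide with the restriction $\bu|_{[0,t']}$, where $t'\in(b,1)$ is the parameter for which $\bu(t')=q'$. By \Cref{thm:charac}, $\bv$ decomposes into a segment $[p,\bv(a')]$ with $\bv(a')\in\Gamma_p$, a geodesic arc on $\pa\cO$ from $\bv(a')$ to $\bv(b')\in\Gamma_{q'}$, and a terminal segment $[\bv(b'),q']$. The proof amounts to uniquely identifying each of these three pieces.

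First I extend $\bv$ by concatenating with the line segment $[q',q]$ (which lies in $\cO^c$ as a sub-segment of $\bu$'s terminal piece) to form $\tilde{\bv}\in\cA$. The inequalities $L(\bv)\le L(\bu|_{[0,t']})=L(\bu)-|q-q'|$ and $L(\tilde{\bv})=L(\bv)+|q-q'|\ge L(\bu)$ together force $L(\tilde{\bv})=L(\bu)$, so after a constant-speed reparametrization $\tilde{\bv}$ is an energy minimizer from $p$ to $q$. Applying \Cref{thm:charac} to $\tilde{\bv}$, its terminal line segment must be a single straight piece from some point of $\Gamma_q$ to $q$; since this piece is the union $[\bv(b'),q']\cup[q',q]$, the $C^1$-regularity from \Cref{thm:exist-reg} at the corner $q'$ forces $\bv(b'),q',q$ to be collinear on the tangent line to $\pa\cO$ at $\bu(b)$. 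Because the open segment strictly between $\bv(b')$ and $q'$ belongs to the non-coincidence set of $\bv$ and therefore avoids $\pa\cO$, the smoothness and convexity of $\pa\cO$ force $\bv(b')=\bu(b)$.

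Having identified $\bv(b')=\bu(b)$, the $C^1$-regularity of $\bv$ at $b'$ makes the terminal tangent of $\bv$'s geodesic arc at $\bu(b)$ parallel to the direction $q'-\bu(b)$, which is in turn parallel to the terminal tangent of $\bu$'s geodesic at $\bu(b)$. By uniqueness of solutions to the geodesic ODE on $\pa\cO$, the two geodesics trace the same curve $\gamma(s)$ on $\pa\cO$, parametrized by arc length backward from $\bu(b)$. Writing $\bv(a')=\gamma(s_*)$, the total length of $\bv$ equals $g(s_*)+|\bu(b)-q'|$ with $g(s):=|p-\gamma(s)|+s$, and a direct computation yields $g'(s)=1+\cos\theta(s)\ge 0$ where $\theta(s)$ is the angle between $\gamma'(s)$ and $\gamma(s)-p$. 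Hence $g$ is non-decreasing, and minimality pins $s_*$ at the smallest admissible value, namely the parameter corresponding to $\bu(a)$, so $\bv(a')=\bu(a)$ and $\bv=\bu|_{[0,t']}$. The main obstacle is this last step: excluding the degenerate case in which $g$ is constant on a subinterval, which would force $\gamma$ to run along a straight ray emanating from $p$ on $\pa\cO$; ruling this out requires using the convexity of $\cO$ together with the geometry of $\Gamma_p$ to exclude a flat portion of $\pa\cO$ along the relevant line through $p$.
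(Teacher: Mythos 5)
Your argument is correct in outline but follows a genuinely different route from the paper's. The paper pins down the exit point on $\Ga_{q'}$ by a length-accounting comparison: relocating the endpoint from $q$ to $q'$ saves exactly $|q-q'|$ along the competitor that leaves $\pa\cO$ at $\bu(b)$, while for any other exit point $x'\in\Ga_{q'}$ the broken path $q'\to x'\to x\in\Ga_q$ together with the triangle inequality in $xqq'$ yields a saving strictly smaller than $|q-q'|$; hence every minimizer to $q'$ must exit at $\bu(b)$. You instead concatenate $\bv$ with $[q',q]$, observe that the result (suitably reparametrized) is again a minimizer from $p$ to $q$, and use the $C^1$ regularity of \Cref{thm:exist-reg} to forbid a corner at $q'$. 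This is a clean alternative, and you then go further than the paper: after identifying the exit point you also prove uniqueness of the remaining portion of the curve via uniqueness for the geodesic ODE and monotonicity of $g(s)=|p-\gamma(s)|+s$, a step the paper leaves implicit. That second half is genuine added value.

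Two gaps remain, and both close with the same observation, namely that the open non-coincidence portions of a minimizer avoid $\pa\cO$ (\Cref{lem:La}, \Cref{lem:u-Ga}). (i) In deducing $\bv(b')=\bu(b)$ you only exclude the configuration in which the open segment $(\bv(b'),q')$ meets $\pa\cO$; if $\pa\cO$ has a flat piece along the tangent line at $\bu(b)$, the point $\bv(b')$ could a priori lie strictly between $\bu(b)$ and $q'$. But then convexity puts the whole segment $[\bu(b),\bv(b')]$ inside $\pa\cO$, so $\bu(t)\in\pa\cO$ for $t$ slightly larger than $b$, contradicting the definition of $b$ as the right endpoint of the coincidence set of $\bu$. (ii) Your closing claim that ``minimality pins $s_*$ at the smallest admissible value'' is not what minimality gives: it gives $g(s_*)=g(s_1)$, and monotonicity then forces either $s_*=s_1$ or $g$ constant on the interval between them. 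In the constant case $\gamma$ runs along a straight segment aimed at $p$, so $p$, $\bu(a)$, $\bv(a')$ are collinear and whichever of $\bu(a)$, $\bv(a')$ is farther from $p$ contains the other in the interior of its incoming segment from $p$; that open segment is the trace of a non-coincidence interval and so avoids $\pa\cO$, while the other point lies on $\pa\cO$ --- a contradiction. Thus the degenerate case you flag as the main obstacle never occurs, and no additional hypothesis excluding flat portions of $\pa\cO$ is needed.
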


\begin{proof}
Consider the vision boundaries $\Ga_q$ and $\Ga_{q'}$. Note that $\Ga_q$ partitions $\pa\cO$ into two distinct regions, with $\Ga_{q'}\setminus \bu(b)$ contained in one of these regions. Furthermore, observe that $\bu(b)$ lies in the intersection of $\Ga_q$ and $\Ga_{q'}$.

We begin by noting that
\begin{align}\label{eq:diff-min}
|q'-\bu(b)|-|q-\bu(b)|=-|q'-q|.
\end{align}
Select a point $x'$ in $\Ga_{q'}$, distinct from $\bu(b)$, and let $x$ denote the intersection of $\Ga_q$ and the geodesic starting at $x'$, directed towards $x'-q'$. Define $l$ as the length of the minimizing geodesic that connects $x'$ and $x$. By the triangle inequality in the triangle $xqq'$, we find
\begin{align}\label{eq:diff-any}
(|q'-x'|+l)-|q-x|>|q'-x|-|q-x|>-|q'-q|.
\end{align}

By comparing \eqref{eq:diff-min} with \eqref{eq:diff-any}, and using \Cref{thm:charac}, we deduce the uniqueness of the energy minimizer for the given fixed points $p$ and $q'$, thereby concluding the proof.
\end{proof}

\begin{proof}[Proof of \Cref{thm:unique}]
For any point $q$ in $\mathcal{S}_2$, \Cref{lem:unique} implies the existence of a line segment contained in $\mathcal{S}_1$. Therefore, $q\in \pa \mathcal{S}_1$. The assertion $\operatorname{int}\mathcal{S}_2=\emptyset$ naturally follows from $\mathcal{S}_2\subseteq \pa \mathcal{S}_1$.
\end{proof}

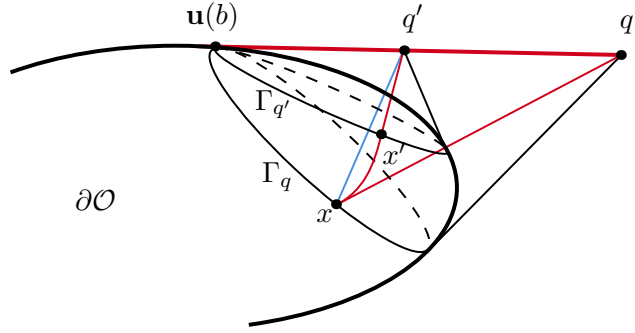
\begin{figure}\label{fig:q'}

\tikzset{every picture/.style={line width=0.75pt}} 

\begin{tikzpicture}[x=0.75pt,y=0.75pt,yscale=-1,xscale=1]

\draw [line width=0.75]    (390.67,66) -- (293.36,164.68) ;
\draw [color={rgb, 255:red, 208; green, 2; blue, 27 }  ,draw opacity=1 ][line width=0.75]    (390.67,66) -- (246.83,141.67) ;
\draw [color={rgb, 255:red, 74; green, 144; blue, 226 }  ,draw opacity=1 ][line width=0.75]    (280.83,63.33) -- (246.83,141.67) ;
\draw [color={rgb, 255:red, 208; green, 2; blue, 27 }  ,draw opacity=1 ][line width=0.75]    (280.83,63.33) -- (269.5,106.33) ;
\draw [color={rgb, 255:red, 208; green, 2; blue, 27 }  ,draw opacity=1 ][line width=1.5]    (390.67,66) -- (177.18,61.57) ;
\draw  [draw opacity=0] (292.35,164.68) .. controls (292.35,164.68) and (292.35,164.68) .. (292.35,164.68) .. controls (285.16,169.64) and (255.18,150.94) .. (225.37,122.92) .. controls (195.57,94.89) and (177.23,68.14) .. (184.42,63.18) -- (238.38,113.93) -- cycle ; \draw   (292.35,164.68) .. controls (292.35,164.68) and (292.35,164.68) .. (292.35,164.68) .. controls (285.16,169.64) and (255.18,150.94) .. (225.37,122.92) .. controls (195.57,94.89) and (177.23,68.14) .. (184.42,63.18) ;  
\draw  [draw opacity=0][dash pattern={on 4.5pt off 4.5pt}] (184.42,63.18) .. controls (184.42,63.18) and (184.42,63.18) .. (184.42,63.18) .. controls (191.6,58.21) and (221.59,76.91) .. (251.39,104.94) .. controls (281.2,132.97) and (299.53,159.71) .. (292.35,164.68) -- (238.38,113.93) -- cycle ; \draw  [dash pattern={on 4.5pt off 4.5pt}] (184.42,63.18) .. controls (184.42,63.18) and (184.42,63.18) .. (184.42,63.18) .. controls (191.6,58.21) and (221.59,76.91) .. (251.39,104.94) .. controls (281.2,132.97) and (299.53,159.71) .. (292.35,164.68) ;  

\draw  [line width=3.75] [line join = round][line cap = round] (281,63.67) .. controls (281,63.67) and (281,63.67) .. (281,63.67) ;
\draw  [draw opacity=0] (302.95,115.57) .. controls (299.62,117.87) and (270.76,107.99) .. (238.5,93.5) .. controls (206.25,79.01) and (182.8,65.4) .. (186.14,63.09) -- (244.54,89.33) -- cycle ; \draw   (302.95,115.57) .. controls (299.62,117.87) and (270.76,107.99) .. (238.5,93.5) .. controls (206.25,79.01) and (182.8,65.4) .. (186.14,63.09) ;  
\draw  [draw opacity=0][dash pattern={on 4.5pt off 4.5pt}] (186.14,63.09) .. controls (186.14,63.09) and (186.14,63.09) .. (186.14,63.09) .. controls (189.47,60.79) and (218.33,70.67) .. (250.58,85.16) .. controls (282.84,99.65) and (306.29,113.26) .. (302.95,115.57) -- (244.54,89.33) -- cycle ; \draw  [dash pattern={on 4.5pt off 4.5pt}] (186.14,63.09) .. controls (186.14,63.09) and (186.14,63.09) .. (186.14,63.09) .. controls (189.47,60.79) and (218.33,70.67) .. (250.58,85.16) .. controls (282.84,99.65) and (306.29,113.26) .. (302.95,115.57) ;  

\draw [line width=0.75]    (302.95,115.57) -- (280.83,63.33) ;
\draw [color={rgb, 255:red, 208; green, 2; blue, 27 }  ,draw opacity=1 ][line width=0.75]    (269.5,106.33) .. controls (265.83,122) and (260.83,134.33) .. (246.83,141.67) ;
\draw  [line width=3.75] [line join = round][line cap = round] (185.67,61.67) .. controls (185.67,61.67) and (185.67,61.67) .. (185.67,61.67) ;
\draw  [line width=3.75] [line join = round][line cap = round] (269.33,106) .. controls (269.33,106) and (269.33,106) .. (269.33,106) ;
\draw  [line width=3.75] [line join = round][line cap = round] (246.67,141.33) .. controls (246.67,141.33) and (246.67,141.33) .. (246.67,141.33) ;
\draw  [line width=3.75] [line join = round][line cap = round] (390.33,66) .. controls (390.33,66) and (390.33,66) .. (390.33,66) ;
\draw  [draw opacity=0][line width=1.5]  (82.51,74.8) .. controls (105.86,66.33) and (134.53,61.33) .. (165.5,61.33) .. controls (244.02,61.33) and (307.67,93.42) .. (307.67,133) .. controls (307.67,166.09) and (263.17,193.95) .. (202.69,202.19) -- (165.5,133) -- cycle ; \draw  [line width=1.5]  (82.51,74.8) .. controls (105.86,66.33) and (134.53,61.33) .. (165.5,61.33) .. controls (244.02,61.33) and (307.67,93.42) .. (307.67,133) .. controls (307.67,166.09) and (263.17,193.95) .. (202.69,202.19) ;  

\draw (388.3,44.6) node [anchor=north west][inner sep=0.75pt]    {$q$};
\draw (278.5,39.07) node [anchor=north west][inner sep=0.75pt]    {$q'$};
\draw (169.5,38.07) node [anchor=north west][inner sep=0.75pt]    {$\mathbf{u}( b)$};
\draw (113.67,132.73) node [anchor=north west][inner sep=0.75pt]    {$\partial \mathcal{O}$};
\draw (208,118.73) node [anchor=north west][inner sep=0.75pt]    {$\Gamma _{q}$};
\draw (204.33,82.07) node [anchor=north west][inner sep=0.75pt]    {$\Gamma _{q'}$};
\draw (268.67,108.4) node [anchor=north west][inner sep=0.75pt]    {$x'$};
\draw (235.33,142.73) node [anchor=north west][inner sep=0.75pt]    {$x$};

\end{tikzpicture}

\caption{Comparison of lengths: $xq'$ versus $xq$.}
\end{figure}

\section{Example: Spherical obstacle}\label{sec:example}
In this last section, we deal with the case where the obstacle $\pa\cO$ has a specific shape: the unit sphere $\bS^{n-1}$. 

\begin{figure}\label{fig:non-unique}

\tikzset{every picture/.style={line width=0.75pt}} 

\begin{tikzpicture}[x=0.75pt,y=0.75pt,yscale=-1,xscale=1]

\draw  [line width=1.5]  (163.5,102) .. controls (163.5,60.85) and (196.85,27.5) .. (238,27.5) .. controls (279.15,27.5) and (312.5,60.85) .. (312.5,102) .. controls (312.5,143.15) and (279.15,176.5) .. (238,176.5) .. controls (196.85,176.5) and (163.5,143.15) .. (163.5,102) -- cycle ;
\draw  [line width=0.75]  (22,102) -- (209.86,32.62) -- (209.86,32.62) -- cycle ;
\draw [line width=0.75]    (22,102) -- (209.41,170.89) ;
\draw  [draw opacity=0][dash pattern={on 4.5pt off 4.5pt}] (208.36,168.95) .. controls (208.36,168.95) and (208.36,168.95) .. (208.36,168.95) .. controls (208.36,168.95) and (208.36,168.95) .. (208.36,168.95) .. controls (194.27,168.95) and (182.86,138.95) .. (182.86,101.95) .. controls (182.86,64.94) and (194.27,34.95) .. (208.36,34.95) -- (208.36,101.95) -- cycle ; \draw  [dash pattern={on 4.5pt off 4.5pt}] (208.36,168.95) .. controls (208.36,168.95) and (208.36,168.95) .. (208.36,168.95) .. controls (208.36,168.95) and (208.36,168.95) .. (208.36,168.95) .. controls (194.27,168.95) and (182.86,138.95) .. (182.86,101.95) .. controls (182.86,64.94) and (194.27,34.95) .. (208.36,34.95) ;  
\draw  [draw opacity=0] (208.36,34.95) .. controls (208.36,34.95) and (208.36,34.95) .. (208.36,34.95) .. controls (222.44,34.95) and (233.86,64.94) .. (233.86,101.95) .. controls (233.86,138.95) and (222.44,168.95) .. (208.36,168.95) -- (208.36,101.95) -- cycle ; \draw   (208.36,34.95) .. controls (208.36,34.95) and (208.36,34.95) .. (208.36,34.95) .. controls (222.44,34.95) and (233.86,64.94) .. (233.86,101.95) .. controls (233.86,138.95) and (222.44,168.95) .. (208.36,168.95) ;  
\draw  [draw opacity=0][dash pattern={on 4.5pt off 4.5pt}] (284.5,157) .. controls (284.5,157) and (284.5,157) .. (284.5,157) .. controls (274.84,157) and (267,132.15) .. (267,101.5) .. controls (267,70.85) and (274.84,46) .. (284.5,46) -- (284.5,101.5) -- cycle ; \draw  [dash pattern={on 4.5pt off 4.5pt}] (284.5,157) .. controls (284.5,157) and (284.5,157) .. (284.5,157) .. controls (274.84,157) and (267,132.15) .. (267,101.5) .. controls (267,70.85) and (274.84,46) .. (284.5,46) ;  
\draw  [draw opacity=0] (284.5,46) .. controls (284.5,46) and (284.5,46) .. (284.5,46) .. controls (294.16,46) and (302,70.85) .. (302,101.5) .. controls (302,132.15) and (294.16,157) .. (284.5,157) -- (284.5,101.5) -- cycle ; \draw   (284.5,46) .. controls (284.5,46) and (284.5,46) .. (284.5,46) .. controls (294.16,46) and (302,70.85) .. (302,101.5) .. controls (302,132.15) and (294.16,157) .. (284.5,157) ;  

\draw  [dash pattern={on 4.5pt off 4.5pt}]  (345,102) -- (22,102) ;
\draw [line width=0.75]    (345.95,102.02) -- (292.59,153.61) ;
\draw [line width=0.75]    (345.95,102.02) -- (290.32,48.67) ;
\draw [color={rgb, 255:red, 208; green, 2; blue, 27 }  ,draw opacity=1 ][line width=1.5]    (22,102) -- (231,75) ;
\draw [color={rgb, 255:red, 208; green, 2; blue, 27 }  ,draw opacity=1 ][line width=1.5]    (231,75) .. controls (259,72) and (273,72) .. (301,82) ;
\draw [color={rgb, 255:red, 208; green, 2; blue, 27 }  ,draw opacity=1 ][line width=1.5]    (345,102) -- (301,82) ;

\draw  [line width=3.75] [line join = round][line cap = round] (206,102) .. controls (206,102) and (206,102) .. (206,102) ;
\draw  [line width=3.75] [line join = round][line cap = round] (285,102) .. controls (285,102) and (285,102) .. (285,102) ;
\draw  [color={rgb, 255:red, 208; green, 2; blue, 27 }  ,draw opacity=1 ][line width=3.75] [line join = round][line cap = round] (231,75) .. controls (231,75) and (231,75) .. (231,75) ;
\draw  [color={rgb, 255:red, 208; green, 2; blue, 27 }  ,draw opacity=1 ][line width=3.75] [line join = round][line cap = round] (301,82) .. controls (301,82) and (301,82) .. (301,82) ;
\draw  [line width=3.75] [line join = round][line cap = round] (345,102) .. controls (345,102) and (345,102) .. (345,102) ;
\draw  [line width=3.75] [line join = round][line cap = round] (24,102) .. controls (24,102) and (24,102) .. (24,102) ;
\draw  [line width=3.75] [line join = round][line cap = round] (239.2,102) .. controls (239.2,102) and (239.2,102) .. (239.2,102) ;

\draw (4,93.4) node [anchor=north west][inner sep=0.75pt]    {$p$};
\draw (357,93.4) node [anchor=north west][inner sep=0.75pt]    {$q$};
\draw (106,41.4) node [anchor=north west][inner sep=0.75pt]    {$\mathcal{C}_{p}$};
\draw (324,52.4) node [anchor=north west][inner sep=0.75pt]    {$\mathcal{C}_{q}$};
\draw (239,5.4) node [anchor=north west][inner sep=0.75pt]    {$\partial \mathcal{O}$};
\draw (244,54.4) node [anchor=north west][inner sep=0.75pt]    {$\textcolor[rgb]{0.82,0.01,0.11}{u}$};
\draw (209,129.4) node [anchor=north west][inner sep=0.75pt]    {$\Gamma _{p}$};
\draw (252.59,130.01) node [anchor=north west][inner sep=0.75pt]    {$\Gamma _{q}$};

\end{tikzpicture}

\caption{Depicts the scenario where $\pa\cO=\bS^n$ and $l_{pq}$ intersects the origin. In this case, there are infinitely many minimizers.}

\end{figure}
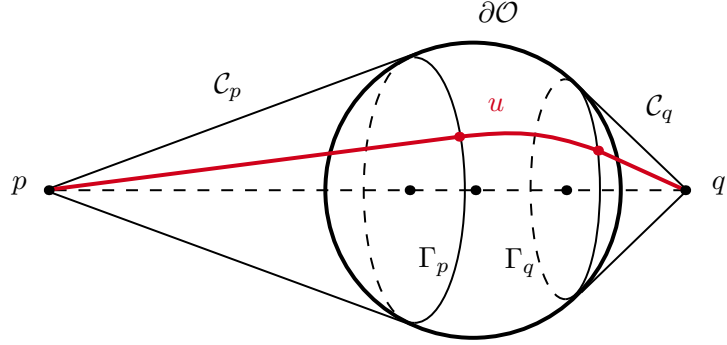

\begin{figure}\label{fig:unique}

\tikzset{every picture/.style={line width=0.75pt}} 

\begin{tikzpicture}[x=0.75pt,y=0.75pt,yscale=-1,xscale=1]

\draw  [line width=1.5]  (163.5,102) .. controls (163.5,60.85) and (196.85,27.5) .. (238,27.5) .. controls (279.15,27.5) and (312.5,60.85) .. (312.5,102) .. controls (312.5,143.15) and (279.15,176.5) .. (238,176.5) .. controls (196.85,176.5) and (163.5,143.15) .. (163.5,102) -- cycle ;
\draw  [line width=0.75]  (22,102) -- (209.86,32.62) -- (209.86,32.62) -- cycle ;
\draw [line width=0.75]    (22,102) -- (209.41,170.89) ;
\draw  [draw opacity=0][dash pattern={on 4.5pt off 4.5pt}] (208.36,168.95) .. controls (208.36,168.95) and (208.36,168.95) .. (208.36,168.95) .. controls (208.36,168.95) and (208.36,168.95) .. (208.36,168.95) .. controls (194.27,168.95) and (182.86,138.95) .. (182.86,101.95) .. controls (182.86,64.94) and (194.27,34.95) .. (208.36,34.95) -- (208.36,101.95) -- cycle ; \draw  [dash pattern={on 4.5pt off 4.5pt}] (208.36,168.95) .. controls (208.36,168.95) and (208.36,168.95) .. (208.36,168.95) .. controls (208.36,168.95) and (208.36,168.95) .. (208.36,168.95) .. controls (194.27,168.95) and (182.86,138.95) .. (182.86,101.95) .. controls (182.86,64.94) and (194.27,34.95) .. (208.36,34.95) ;  
\draw  [draw opacity=0] (208.36,34.95) .. controls (208.36,34.95) and (208.36,34.95) .. (208.36,34.95) .. controls (222.44,34.95) and (233.86,64.94) .. (233.86,101.95) .. controls (233.86,138.95) and (222.44,168.95) .. (208.36,168.95) -- (208.36,101.95) -- cycle ; \draw   (208.36,34.95) .. controls (208.36,34.95) and (208.36,34.95) .. (208.36,34.95) .. controls (222.44,34.95) and (233.86,64.94) .. (233.86,101.95) .. controls (233.86,138.95) and (222.44,168.95) .. (208.36,168.95) ;  
\draw  [draw opacity=0][dash pattern={on 4.5pt off 4.5pt}] (305.95,131.56) .. controls (305.95,131.56) and (305.95,131.56) .. (305.95,131.56) .. controls (300.89,133.91) and (286.33,113.27) .. (273.43,85.47) .. controls (260.52,57.66) and (254.17,33.22) .. (259.23,30.87) -- (282.59,81.21) -- cycle ; \draw  [dash pattern={on 4.5pt off 4.5pt}] (305.95,131.56) .. controls (305.95,131.56) and (305.95,131.56) .. (305.95,131.56) .. controls (300.89,133.91) and (286.33,113.27) .. (273.43,85.47) .. controls (260.52,57.66) and (254.17,33.22) .. (259.23,30.87) ;  
\draw  [draw opacity=0] (259.23,30.87) .. controls (259.23,30.87) and (259.23,30.87) .. (259.23,30.87) .. controls (259.23,30.87) and (259.23,30.87) .. (259.23,30.87) .. controls (264.29,28.52) and (278.85,49.16) .. (291.75,76.96) .. controls (304.65,104.77) and (311.01,129.21) .. (305.95,131.56) -- (282.59,81.21) -- cycle ; \draw   (259.23,30.87) .. controls (259.23,30.87) and (259.23,30.87) .. (259.23,30.87) .. controls (259.23,30.87) and (259.23,30.87) .. (259.23,30.87) .. controls (264.29,28.52) and (278.85,49.16) .. (291.75,76.96) .. controls (304.65,104.77) and (311.01,129.21) .. (305.95,131.56) ;  

\draw  [dash pattern={on 4.5pt off 4.5pt}]  (238,102) -- (22,102) ;
\draw [line width=0.75]    (339.96,57.89) -- (339.96,57.89) -- (306.99,130.41) ;
\draw [line width=0.75]    (340,58.18) -- (263.92,32.27) ;
\draw [color={rgb, 255:red, 208; green, 2; blue, 27 }  ,draw opacity=1 ][line width=1.5]    (22,102) -- (225.4,51.6) ;
\draw [color={rgb, 255:red, 208; green, 2; blue, 27 }  ,draw opacity=1 ][line width=1.5]    (225.4,51.6) .. controls (243,47.2) and (256.2,46) .. (276.2,48) ;
\draw [color={rgb, 255:red, 208; green, 2; blue, 27 }  ,draw opacity=1 ][line width=1.5]    (340,58.18) -- (276.2,48) ;
\draw  [line width=3.75] [line join = round][line cap = round] (206,102) .. controls (206,102) and (206,102) .. (206,102) ;
\draw  [line width=3.75] [line join = round][line cap = round] (282.6,83) .. controls (282.6,83) and (282.6,83) .. (282.6,83) ;
\draw  [color={rgb, 255:red, 208; green, 2; blue, 27 }  ,draw opacity=1 ][line width=3.75] [line join = round][line cap = round] (224.4,51.4) .. controls (224.4,51.4) and (224.4,51.4) .. (224.4,51.4) ;
\draw  [color={rgb, 255:red, 208; green, 2; blue, 27 }  ,draw opacity=1 ][line width=3.75] [line join = round][line cap = round] (275.8,48) .. controls (275.8,48) and (275.8,48) .. (275.8,48) ;
\draw  [line width=3.75] [line join = round][line cap = round] (339.2,58.6) .. controls (339.2,58.6) and (339.2,58.6) .. (339.2,58.6) ;
\draw  [line width=3.75] [line join = round][line cap = round] (24,102) .. controls (24,102) and (24,102) .. (24,102) ;
\draw  [line width=3.75] [line join = round][line cap = round] (238.2,102) .. controls (238.2,102) and (238.2,102) .. (238.2,102) ;
\draw  [dash pattern={on 4.5pt off 4.5pt}]  (340,58.18) -- (238,102) ;

\draw (4,93.4) node [anchor=north west][inner sep=0.75pt]    {$p$};
\draw (351.8,45.8) node [anchor=north west][inner sep=0.75pt]    {$q$};
\draw (106,41.4) node [anchor=north west][inner sep=0.75pt]    {$\mathcal{C}_{p}$};
\draw (302,26.4) node [anchor=north west][inner sep=0.75pt]    {$\mathcal{C}_{q}$};
\draw (239,5.4) node [anchor=north west][inner sep=0.75pt]    {$\partial \mathcal{O}$};
\draw (236.8,49.2) node [anchor=north west][inner sep=0.75pt]    {$\textcolor[rgb]{0.82,0.01,0.11}{u}$};
\draw (209,129.4) node [anchor=north west][inner sep=0.75pt]    {$\Gamma _{p}$};
\draw (270.19,115.21) node [anchor=north west][inner sep=0.75pt]    {$\Gamma _{q}$};

\end{tikzpicture}

\caption{Illustrates the scenario where $\pa\cO=\bS^n$ but $l_{pq}$ does not pass through the origin. In this case, the minimizer is unique.}

\end{figure}
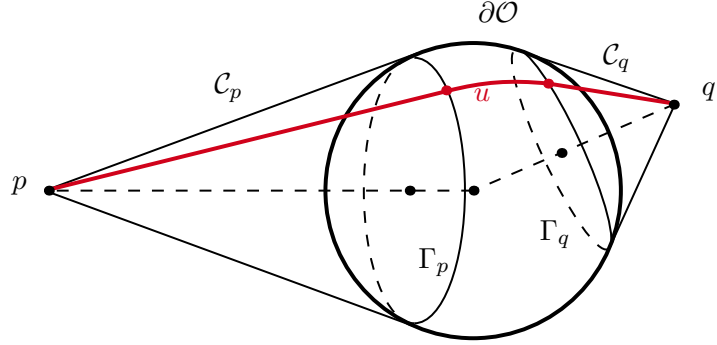

Let $p$ be a point such that $|p|>1$. If $\pa\cO=\bS^{n-1}$, then the vision cone $\mathcal{C}_p$ forms a regular cone, and the vision boundary $\Ga_p$ is a circle, which is the intersection of $\bS^{n-1}$ and a plane perpendicular to vector $p$. Note that any line segment connecting $p$ and a point in $\Ga_p$ has the same length. This also holds for $q$ with $|q|>1$. 

By \Cref{thm:charac}, the minimizer must be achieved by shortest geodesics connecting two vision boundaries $\Ga_p$ and $\Ga_q$. It should be noted that these two boundaries are parallel if and only if the line segment $l_{pq}$ passes through the origin. Furthermore, any geodesic on the sphere, forming a part of a great circle, is determined by a plane passing through the origin.

If $\Ga_p$ and $\Ga_q$ are parallel, then any geodesic that intersects the vision boundary orthogonally can be the geodesic part of minimizers. Since there are infinitely many such geodesics generated by rotation, we have infinite number of minimizers.
On the other hand, if $\Ga_p$ is not parallel to $\Ga_q$, then there exists a unique minimizing geodesic. This can be realized as one of the intersections between the plane passing through points $p$, $q$, and the origin, and the sphere $\bS^{n-1}$. 

Thus, the configurations of $p$ and $q$ that yield multiple minimizers occur when $p$, the origin, and $q$ are in order on a straight line. When the point $p$ is fixed, the non-uniqueness set $\mathcal{S}_2$ in \Cref{thm:unique} becomes one of the two components of $L\sm\overline\cO$, specifically, the half-line that does not include $p$.

\bigskip

We conclude this paper by suggesting the following conjecture:
\begin{conjecture}\label{conj}
Fix a point $p$ in $\R^n\sm \overline\cO$. 
Let $\mathcal{S}_{2}$ denotes the set of points $q$ in $\R^n\sm \overline \cO$ for which the minimizer of the energy \eqref{eq:E} over $\cA$ is not unique. Then the Hausdorff dimension of $\mathcal{S}_2$ is at most $n-1$.
\end{conjecture}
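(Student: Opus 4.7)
The plan is to prove the conjecture in three linked steps: (i) establish that the minimum length function $L(q):=\inf_{\bu\in\cA}\sqrt{E(\bu)}$, well-defined by \Cref{thm:exist-reg}, is locally semiconcave on $\R^n\setminus\overline\cO$; (ii) show that the non-uniqueness set $\mathcal{S}_2$ is contained in the set of non-differentiability points of $L$; (iii) apply Alberti's theorem that the singular set of a locally semiconcave function on an open subset of $\R^n$ has $\sigma$-finite $(n-1)$-dimensional Hausdorff measure, and hence Hausdorff dimension at most $n-1$.

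\textbf{Semiconcavity of $L$.} For $q_0\in\R^n\setminus\overline\cO$, let $\bu_0$ be a minimizer from $p$ to $q_0$ and let $q_0^*\in\Gamma_{q_0}$ be its terminal vision endpoint (\Cref{thm:charac}). Define the trial function
\[
g(q):=\bigl(L(q_0)-|q_0-q_0^*|\bigr)+|q-q_0^*|,
\]
which is the length of the admissible broken curve obtained by concatenating $\bu_0|_{[0,b_0]}$ with the straight segment from $q_0^*$ to $q$. Then $g(q_0)=L(q_0)$, and for $q$ in a sufficiently small neighborhood of $q_0$ the segment $[q_0^*,q]$ stays in $\cO^c$, so $g(q)\ge L(q)$ locally. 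Since $q_0\notin\partial\cO$, the trial function $g$ is smooth near $q_0$ with Hessian bounded above by a constant depending on $1/|q_0-q_0^*|$, establishing local semiconcavity of $L$ at $q_0$.

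\textbf{Non-uniqueness forces non-differentiability.} Let $\bu_1,\bu_2$ be distinct minimizers for the same $q$, with terminal vision endpoints $q_i^*$ and exit directions $e_i:=(q-q_i^*)/|q-q_i^*|$. By \Cref{thm:exist-reg} each $\bu_i$ is $C^1$, so the tangent of the geodesic portion of $\bu_i$ at $q_i^*$ coincides with $e_i$. The trial-function inequality centered at $q$ with pivot $q_i^*$ yields $L(q')\le L(q)+e_i\cdot(q'-q)+O(|q'-q|^2)$ for $q'$ near $q$; if $L$ were differentiable at $q$, the first-variation identity would force $\nabla L(q)=e_i$ for every $i$, so $e_1=e_2=:e$. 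Then $q_1^*,q_2^*$ both lie on the line $\{q-se:s>0\}$ and satisfy the tangency $e\perp\nu(q_i^*)$; strict convexity of $\cO$, or a short separate argument in the degenerate flat-face case, forces $q_1^*=q_2^*$. Uniqueness of the geodesic initial-value problem on the smooth Riemannian manifold $\partial\cO$ with prescribed endpoint and terminal tangent then implies the geodesic portions coincide, and consequently $\bu_1\equiv\bu_2$, contradicting distinctness. Contrapositively, $\mathcal{S}_2$ is contained in the set of non-differentiability points of $L$.

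\textbf{Conclusion and main obstacle.} Combined with Alberti's theorem this gives $\dim_{\mathcal{H}}(\mathcal{S}_2)\le n-1$. The principal technical subtlety is handling the degenerate configuration in which the line $q+\R e$ is tangent to $\partial\cO$ at two distinct points: this is impossible when $\cO$ is strictly convex, and is otherwise confined to a codimension-one sub-family of base configurations, so it should not alter the final Hausdorff dimension bound but does require a separate analysis. A secondary point to verify carefully is that the trial function $g$ provides a valid upper bound on $L$ in a genuine neighborhood of $q_0$ and that the resulting Hessian constant, while possibly blowing up as $q_0\to\partial\cO$, is locally finite -- which is all that Alberti's theorem demands.
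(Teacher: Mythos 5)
First, note that the paper does not prove this statement: it is posed as an open conjecture in \Cref{sec:example}, so your attempt is being measured against the mathematics itself rather than against an existing argument. Your three-step strategy (semiconcavity of the minimal length $L$, non-uniqueness $\Rightarrow$ non-differentiability, rectifiability of the singular set of a semiconcave function) is a genuinely promising route, but as written step (i) contains a false claim that the rest of the argument leans on. The function $g(q)=\bigl(L(q_0)-|q_0-q_0^*|\bigr)+|q-q_0^*|$ is \emph{not} an upper bound for $L$ in any neighborhood of $q_0$. The pivot $q_0^*$ lies on $\pa\cO$ and the segment $[q_0^*,q_0]$ is tangent to $\cO$ there; if $q$ is perturbed from $q_0$ toward the side of the tangent hyperplane containing $\cO$, the chord $[q_0^*,q]$ enters the interior of $\cO$ (take $\cO$ the unit disk, $q_0^*=(0,1)$, $q_0=(T,1)$, $q=(T,1-\e)$: the chord dips below the circle near $q_0^*$). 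For such $q$ the admissible path from $q_0^*$ to $q$ must go around the obstacle and is strictly longer than the chord, so in fact $g(q)<L(q)$ there: your trial function is a lower bound on exactly the side where you need an upper bound. The fix is standard but must be made: take the pivot $z$ on the final straight segment of $\bu_0$ at distance $c_K:=\tfrac12\dist(K,\cO)$ from $q_0$ (for $q_0$ in a compact $K\su\R^n\sm\overline\cO$). Then $\dist(z,\cO)\ge \dist(q_0,\cO)-|q_0-z|\ge c_K$, the chord $[z,q]$ stays in $\cO^c$ for $|q-q_0|<c_K$, and $g_z(q)=L(z)+|q-z|$ is a genuine smooth upper bound touching $L$ at $q_0$ with Hessian $\le 2c_K^{-1}I$ on $B(q_0,c_K/2)$, uniformly over $K$. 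This repairs both the semiconcavity and the first-variation inequality $L(q')\le L(q)+e_i\cdot(q'-q)+O(|q'-q|^2)$ used in step (ii), since $e_i=(q-z_i)/|q-z_i|$ is unchanged by moving the pivot along the segment.

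Two further points in step (ii) are glossed over. First, "the geodesic portions coincide" does not immediately follow from ODE uniqueness: two minimizers sharing the terminal point $q^*$ and terminal tangent have geodesic portions that are sub-arcs of one maximal geodesic, but they could a priori depart from $\Ga_p$ at two different points $x_1\neq x_2$ of that geodesic. Equality of total lengths then forces $|p-x_1|=|p-x_2|+d_\gamma(x_2,x_1)$, which by the triangle inequality requires the geodesic arc from $x_2$ to $x_1$ to be a straight segment contained in $[p,x_1]$; this is impossible for strictly convex $\cO$ but is exactly the flat-face degeneracy you mention, and the paper only assumes $\cO$ convex with smooth boundary. Second, the same flat-face issue appears in your identification $q_1^*=q_2^*$. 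So either you restrict the conjecture to strictly convex obstacles (where your argument, with the pivot fix, appears to close), or you must supply the separate analysis you defer for tangent lines meeting $\pa\cO$ in a segment. Step (iii) (Zaj\'{\i}\v{c}ek/Alberti--Ambrosio--Cannarsa rectifiability of the singular set of a locally semiconcave function) is correctly invoked and gives $\dim_{\mathcal H}(\mathcal S_2)\le n-1$ once (i) and (ii) are secured.
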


Note that if $\pa\cO$ is an ellipsoid in $\R^3$, then its cut locus can be one-dimensional, capable of generating a two-dimensional $\mathcal{S}_2$.


\section*{Acknowledgement}


Ki-Ahm Lee has been supported by National Research Foundation of Korea grant NRF-2020R1A2C1A01006256.
Taehun Lee has been supported by National Research Foundation of Korea grant RS-2023-00211258. The second author would like to thank Prof. Richard Schoen for introducing us to the paper \cite{Morgan78_Invent}.


\end{document}